\documentclass[submission]{FPSAClike}


\usepackage[utf8]{inputenc}
\usepackage{mathtools}

\newtheorem{thm}{Theorem}

\newtheorem{cor}{Corollary}
\newtheorem{pro}{Proposition}
\newtheorem{con}{Conjecture}

\title[Fighting Fish: enumerative properties]{Fighting Fish:
  enumerative properties}

\author[Duchi, Guerrini, Rinaldi and Schaeffer]{Enrica Duchi
\addressmark{1}, \and Veronica Guerrini\addressmark{2}, \and Simone Rinaldi\addressmark{2},\\ \and Gilles Schaeffer\addressmark{3}}

\address{\addressmark{1}IRIT, Universit\'e Paris Diderot, Paris, France\\ 
\addressmark{2}Università di Siena, Siena, Italy\\
\addressmark{3}LIX, CNRS, \'Ecole Polytechnique, Palaiseau, France\\
}

\received{\today}


\abstract{Fighting fish were very recently introduced by the authors
  as combinatorial structures made of square tiles that form two
  dimensional branching surfaces. A main feature of these fighting fish is
  that the area of uniform random fish of size $n$ scales like
  $n^{5/4}$ as opposed to the typical $n^{3/2}$ area behavior of the
  staircase or direct convex polyominoes that they generalize.

  In this extended abstract we concentrate on enumerative properties
  of fighting fish: in particular we provide a new decomposition and
  we show that the number of fighting fish with $i$ left lower free
  edges and $j$ right lower free edges is equal to
  \begin{equation*}
  \frac{(2i+j-2)!(2j+i-2)!}{i!j!(2i-1)!(2j-1)!}.
  \end{equation*}
  These numbers are known to count rooted planar non-separable maps
  with $i+1$ vertices and $j+1$ faces, or two-stack-sortable
  permutations with respect to ascending and descending runs, or left
  ternary trees with respect to vertices with even and odd abscissa.
  However we have been unable until now to provide any explicit bijection
  between our fish and such structures. Instead we provide new refined
  generating series for left ternary trees to prove further
  equidistribution results.  }


\keywords{Enumerative combinatorics, exact formulas, bijections}


\usepackage[backend=bibtex]{biblatex}
\addbibresource{Fish-FPSAC.bib}

\begin{document}

\maketitle

\section{Introduction}

In a recent paper \cite{fighting-fish} we introduced a new family of
combinatorial structures which we call {\em fighting fish} since they
are inspired by the aquatic creatures known under the same name (see
\href{https://fr.wikipedia.org/wiki/Combattant}{the wikipedia page on Betta Splendens fish}).  The easiest
description of fighting fish is that they are built by gluing together
unit squares of cloth along their edges in a directed way that
generalize the iterative construction of directed convex polyominoes
\cite{tony}.

\begin{figure}
\begin{center}
\includegraphics[scale=0.8]{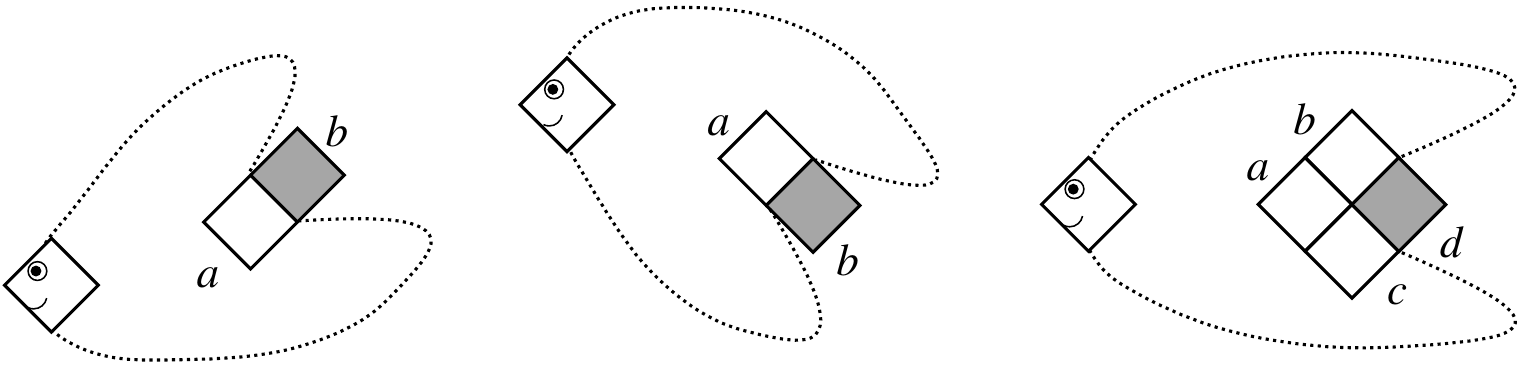}
\end{center}
\caption{The three ways to grow a fish by adding a cell.}
\label{fig:IterativeConstruction}
\end{figure}

More precisely,  we consider 45 degree tilted
unit squares which we call \emph{cells}, and
we call the four edges of these cells \emph{left upper edge},
\emph{left lower edge}, \emph{right upper edge} and \emph{right
lower  edge} respectively. We intend to glue cells along edges, and we call
\emph{free} an edge of a cell which it is not glued to an edge of
another cell. All fighting fish are then obtained from an initial cell
called the \emph{head} by attaching cells one by one in one of the
three following ways: (see Figure~\ref{fig:IterativeConstruction})
\begin{itemize}
\item Let $a$ be a cell already in the fish whose right upper
  edge is free; then glue the left lower edge of a new cell $b$ to the
  right upper edge of $a$.

\item Let $a$ be a cell already in the fish whose right lower
  edge is free; then glue the left upper edge of a new cell $b$ to the
  right lower edge of $a$.

\item Let $a$, $b$ and $c$ be three cells already in the fish and such
  that $b$ (resp. $c$) has its left lower (resp. upper) edge glued to
  the right upper (resp. lower) side of $a$, and $b$ (resp. $c$) has
  its right lower (resp. right upper) edge free; then simultaneously
  glue the left upper and lower edges of a new cell $d$ respectively to
  the right lower edge of $b$ and to the right upper edge of $c$.
\end{itemize}
While this description is iterative we are interested in the objects
that are produced, independently of the order in which cells are
added: a \emph{fighting fish} is a collection of cells glued together
edge by edge that \emph{can} be obtained by the iterative process
above. The \emph{head} of the fighting fish is the only cell with two
free left edges, its \emph{nose} is the leftmost point of the head; a
\emph{final cell} is a cell with two free right edges, and the corresponding \emph{tail} is its rightmost point; the \emph{fin} is the
path that starts from the nose of the fish, follows its border
counterclockwise, and ends at the first tail it meets (see Figure
\ref{fig:fish1}(a)).

The \emph{size} of a fighting fish is the number of lower free  edges
(which is easily seen to be equal to the number of upper free
edges). Moreover, the \emph{left size} (resp. \emph{right size}) of a
fighting fish is its number of left lower free edges (resp. right
lower free edges). Clearly, the left and right size of a fish sum to its size.
The \emph{area} of a fighting fish is the number of its cells.

\medskip

Examples of fighting fish are parallelogram polyominoes (aka staircase
polyominoes), directed convex polyominoes, and more generally simply
connected directed polyominoes in the sense of \cite{tony}. However,
one should stress the fact that fighting fish are not necessarily
polyominoes because cells can be adjacent without being glued together
and more generally cells are not constrained to fit in the plane and
can cover each other, as illustrated by Figure~\ref{fig:fish1}(a)
and~\ref{fig:fish1}(d).  The two smallest fighting fish which are not
polyominoes have size 5 and area 4: as illustrated by
Figure~\ref{fig:fish1}(c), they are obtained by gluing a square $a$ to
the upper right edge of the head, a square $b$ to the right lower edge
of the head and either a square $c$ to the right lower edge of $a$, or
a square $d$ to the upper right edge of $b$.  The smallest fighting
fish not fitting in the plane has size 6 and area 5, it is obtained by
gluing both $c$ to $a$ and $d$ to $b$: in the natural projection of
this fighting fish onto the plane, squares $c$ and $d$ have the same
image. Observe that we do not specify whether $c$ is above or below
$d$; rather we consider that the surface has a branch point at vertex
$c\cap d$ (see Figure~\ref{fig:fish1}(d)).

\begin{figure}
\begin{center}
\includegraphics[scale=0.7]{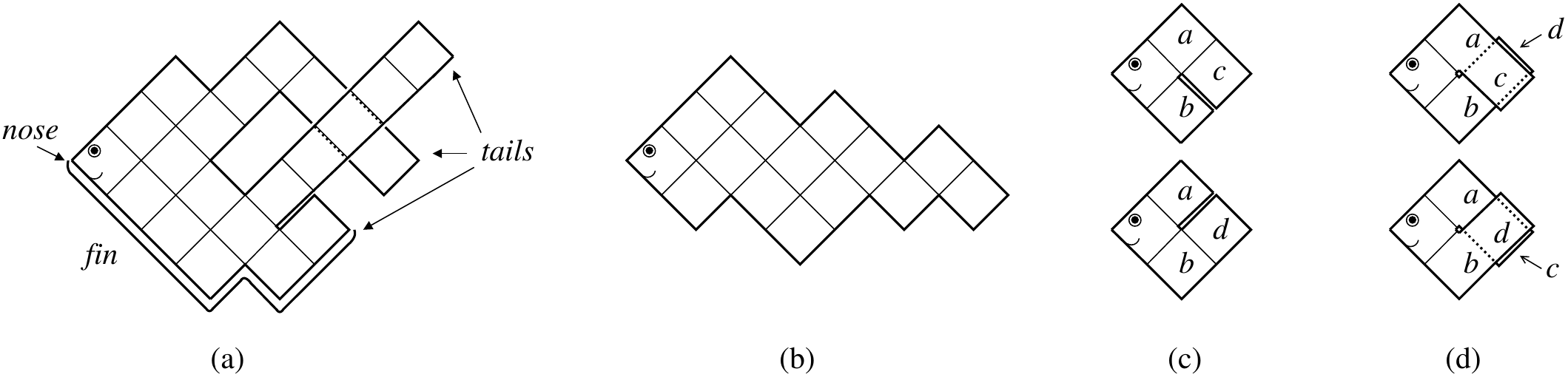}
\end{center}
\caption{(a) A fighting fish which is not a polyomino; (b) A
  parallelogram polyomino; (c) The two fighting fish with area 4 that
  are not polyominoes; (d) Two different representations of the unique
  fighting fish with area 5 not fitting in the plane.}
\label{fig:fish1}
\end{figure}

\medskip
In \cite{fighting-fish} we obtained the generating series of fighting
fish using essentially Temperley's approach, that is a decomposition
in vertical slices. This allowed us to prove:
\begin{thm}[\cite{fighting-fish}]\label{thm:fighting-fish}
The number of fighting fish with $n+1$ lower free edges is 
\begin{equation}
\frac{2}{(n+1)(2n+1)}{3n\choose n}
\end{equation}
\end{thm}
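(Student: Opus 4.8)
The plan is to follow Temperley's method and decompose each fighting fish into vertical slices, building it one column at a time starting from the head. Because the surface branches, a single slice may connect to several parts of the object, so the natural bookkeeping is through a catalytic variable $x$ recording the size of the interface between the portion of the fish already constructed and the next slice to be attached --- concretely, the number of free right edges currently available to the growth rules. I would introduce a bivariate generating function $F(t,x)=\sum_{n,k} f_{n,k}\,t^{n}x^{k}$, where $t$ marks the number of lower free edges and $x$ marks this interface parameter, so that the sought quantity is $[t^{n+1}]\,F(t,1)$ after a suitable specialization.

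Next I would translate the three growth rules of the iterative construction into a functional equation for $F(t,x)$. Adding a cell by rule~1 or rule~2 attaches it to a single free right edge and modifies the interface in a way that acts almost linearly on $x$, whereas rule~3 fills a notch between two consecutive cells and thereby shrinks the interface; each operation contributes a power of $t$ together with a rational factor in $x$. Collecting these contributions should close up into a single equation with one catalytic variable, of the general shape $K(t,x)\,F(t,x)=a(t,x)+b(t,x)\,F(t,1)$, where the kernel $K$ is a polynomial in $x$ (and the branching rule may well force a term quadratic in $F(t,1)$, in which case the quadratic variant of Brown and Tutte is the right tool).

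I would then eliminate the catalytic variable by the kernel method: the kernel $K(t,x)$, read as a polynomial in $x$, has a small root $x_0(t)$ that is a power series in $t$ vanishing at $t=0$, and substituting $x=x_0(t)$ cancels the left-hand side to leave an algebraic equation for $F(t,1)$ alone. I expect this equation to be cubic, reflecting the fact that these numbers equal $\frac{2}{n+1}$ times the ternary-tree numbers $\frac{1}{2n+1}\binom{3n}{n}$; accordingly I would parametrize the solution through the ternary-tree series $T=1+tT^{3}$ and identify $F(t,1)$ as a simple rational expression in $T$.

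Finally, with $F(t,1)$ written in the ternary-tree parameter, I would extract $[t^{n+1}]\,F(t,1)$ by Lagrange inversion applied to $T$, turning the rational expression into the closed form $\frac{2}{(n+1)(2n+1)}\binom{3n}{n}$. The main obstacle I anticipate is the very first step: correctly choosing the interface statistic so that the three \emph{branching} growth rules assemble into a single equation with only one catalytic variable, and then performing the kernel substitution cleanly. Once the cubic for $F(t,1)$ is in hand, the Lagrange-inversion computation is routine.
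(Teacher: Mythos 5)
Your proposal is essentially the route the paper itself attributes to \cite{fighting-fish}: a Temperley-style vertical-slice decomposition with a single catalytic interface variable, elimination of that variable, and extraction of the coefficient via the ternary-tree parametrization --- and your expectation is confirmed by the paper's own series, since at $y=a=b=1$ Equation~\eqref{eqn:eq6U} reduces to $B=t/(1-B)^2$, i.e.\ $B=tT^2$ with $T=1+tT^3$, after which Lagrange inversion gives the closed form. The present paper merely cites that proof and instead develops an alternative wasp-waist decomposition, but your sketch matches the original argument's structure correctly.
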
 
We showed moreover that the average area of fighting fish of size $n$
is of order $n^{5/4}$.  This behavior suggests that, although fighting
fish are natural generalizations of directed convex polyominoes, they
belong to a different universality class: indeed the order of
magnitude of the area of most classes of convex polyominoes is rather
$n^{3/2}$ \cite{richard}.

In the present extended abstract we explore further the remarkable
enumerative properties of fighting fish. We propose in
Section~\ref{sec:deco} a new decomposition that extends to fighting
fish the classical wasp-waist decomposition of polyominoes
\cite{MBMinTony}.  Using the resulting equation we compute in
Section~\ref{sec:gfP1} the generating series of fighting fish with
respect to the numbers of left and right lower free edges, fin length and
number of tails, and use the resulting explicit parametrization to
prove the following bivariate extension of
Theorem~\ref{thm:fighting-fish}:
\begin{thm}\label{thm:count}
The number of fighting fish with $i$ left lower free edges and $j$ right
lower free edges is
\begin{equation}\label{for:for1}
\frac{(2i+j-2)!(2j+i-2)!}{i!j!(2i-1)!(2j-1)!}
=\frac1{ij}{2i+j-2\choose j-1}{2j+i-2\choose i-1}.
\end{equation}
\end{thm}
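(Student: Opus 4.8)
\section*{Proof proposal}

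The plan is to proceed analytically rather than bijectively, exactly as foreshadowed in the introduction: first turn the decomposition of Section~\ref{sec:deco} into a functional equation, then solve it, and finally extract coefficients. I would let $P(x,y;t)$ be the generating function of fighting fish in which $x$ marks left lower free edges, $y$ marks right lower free edges, and the catalytic variable $t$ records the statistic (number of tails, or equivalently fin length) that is needed to make the wasp-waist decomposition recursive. Reading the decomposition --- which expresses a fish either as the head alone or as a fish with one or two smaller fish grafted along its exposed right border --- should produce an equation of the form
\[
P(x,y;t) = xyt + (\text{linear in } P) + (\text{quadratic in } P),
\]
in which $P$ occurs evaluated at $t$ and at the specialized value $t=1$. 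The single catalytic variable $t$ is what makes this equation solvable in closed form.

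The second step is to solve the equation by the kernel method. After collecting the terms carrying the catalytic variable, the equation reads $K(x,y;t)\,P(x,y;t)=B(x,y;t)$, where the right-hand side involves only $P(x,y;1)$ and known series. I would determine the unique formal power series root $t=\tau(x,y)$ of the kernel $K=0$ for which the substitution keeps $P$ a well-defined power series, and set $t=\tau$ to annihilate the left-hand side. The resulting relation pins down $P(x,y;1)$, and, after eliminating $\tau$, yields an explicit algebraic parametrization of the bivariate series $\sum_{i,j} a_{i,j}\,x^i y^j$ counting fighting fish by left and right size.

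The decisive step is the coefficient extraction. The shape of the target --- the factorization $\frac{1}{ij}\binom{2i+j-2}{j-1}\binom{2j+i-2}{i-1}$ with its symmetric exponents $2i+j-2$ and $2j+i-2$ --- signals that the parametrization should take the form of a symmetric pair of series $u=u(x,y)$, $v=v(x,y)$ defined by a system such as $u=x\,(1+v)^2(1+u)$ and $v=y\,(1+u)^2(1+v)$, in terms of which the generating function becomes rational. I would then apply the multivariate Lagrange inversion (Good's) formula: the powers of $(1+u)$ and $(1+v)$ with exponents $2i+j$ and $2j+i$ that arise from taking the $i$-th and $j$-th powers of the right-hand sides account for the two binomial coefficients, while the Jacobian of the system together with the rational prefactor of the generating function must conspire to produce the leftover factor $\frac{1}{ij}$ and the shifts by $2$ in the upper indices.

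The hard part is precisely this last conspiracy. Setting up and solving the functional equation by the kernel method is essentially mechanical once the decomposition is fixed, but identifying the correct catalytic system $(u,v)$ and verifying that Good's formula collapses to exactly the advertised product --- in particular that the determinant of the Jacobian cancels against the prefactor to leave $\frac{1}{ij}$ --- is delicate and is where most of the effort will go. Two independent checks guide and validate the computation: the manifest symmetry of the formula under $i\leftrightarrow j$, which must be mirrored by the left/right symmetry of the system, and the requirement that summing $a_{i,j}$ over $i+j=n+1$ reproduce the univariate count $\frac{2}{(n+1)(2n+1)}\binom{3n}{n}$ of Theorem~\ref{thm:fighting-fish}.
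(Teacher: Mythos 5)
Your overall route is the paper's own: wasp-waist decomposition $\to$ functional equation with one catalytic variable $\to$ algebraic parametrization $\to$ bivariate Lagrange inversion. Your third step is in fact exactly right: the paper (Appendix~\ref{ap:lagrange}) introduces the symmetric pair $\bar R=ta(1+\bar R)(1+\bar S)^2$, $\bar S=tb(1+\bar R)^2(1+\bar S)$, in terms of which $P(1)=t(1+\bar R)(1+\bar S)(1-\bar R\bar S)$ is rational, and extracts $[a^{i-1}b^{j-1}]$ by bivariate Lagrange inversion --- precisely the system you guessed from the shape of the binomials.

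The genuine gap is in your second step. The equation produced by the decomposition, Equation~\eqref{eqn:eq0}, is \emph{quadratic} in the unknown series $P(u)$: the factor $(1+aP(u))(1+bP(u))$ and the term $P(u)\bigl(P(1)-P(u)\bigr)$ both contribute $P(u)^2$. Consequently it cannot be put in the linear kernel form $K(u)\,P(u)=B(u,P(1))$ that your plan relies on, and substituting a root of a kernel does not annihilate anything: after substitution you would still face one relation in the two unknowns $P(\tau)$ and $P(1)$. The missing idea is the Bousquet-M\'elou--Jehanne method for polynomial equations with one catalytic variable \cite{MBMA}, which the paper uses: differentiate the equation with respect to $u$, choose the series $U$ that cancels the coefficient of $\frac{\partial}{\partial u}P(u)$ (Equation~\eqref{eqn:eq2U}), and observe that the remaining factor must then vanish as well; together with the original equation at $u=U$ this yields \emph{three} equations in the three unknowns $U$, $P(U)$, $P(1)$, whose elimination gives the parametrization $B=P(U)$ of Theorem~\ref{thm:gfP1} that feeds your Lagrange step. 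A secondary inaccuracy: the number of tails and the fin length are \emph{not} equivalent statistics --- the catalytic variable is the fin length (marked by $u$, visible in the divided difference $\frac{P(1)-P(u)}{1-u}$), while tails are marked by the inert variable $y$, which is simply set to $1$ for this theorem. Your two sanity checks (the $i\leftrightarrow j$ symmetry and the marginal sum against Theorem~\ref{thm:fighting-fish}) are sound.
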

We also discuss in Section~\ref{sec:marked} several remarkable
relations between fighting fish with marked points of various
types. In particular we prove:
\begin{thm}\label{thm:marked}
The number of fighting fish with $i$ left lower free and $j$ right lower free
edges with a marked tail is
\begin{equation}\label{for:for2}
\frac{(2i+2j-3)}{(2i-1)(2j-1)}{2i+j-3\choose j-1}{2j+i-3\choose i-1}
\end{equation}
\end{thm}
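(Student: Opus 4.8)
The plan is to derive Theorem~\ref{thm:marked} from the refined generating series machinery already promised for Theorem~\ref{thm:count}, since both formulas involve the same binomial structure with shifted parameters. Let me think about how the marked-tail statistic fits in.

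Let me denote by $F(x,y)$ the bivariate generating series where $x$ marks left lower free edges and $y$ marks right lower free edges. Theorem~\ref{thm:count} tells us the coefficient of $x^i y^j$ is $\frac{1}{ij}\binom{2i+j-2}{j-1}\binom{2j+i-2}{i-1}$. The target formula~\eqref{for:for2} is
$$\frac{(2i+2j-3)}{(2i-1)(2j-1)}\binom{2i+j-3}{j-1}\binom{2j+i-3}{i-1}.$$

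The plan is to interpret marking a tail combinatorially and then translate this into an operation on generating series.

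First I would establish what "marking a tail" means in terms of the parametrization. The number of fighting fish with a marked tail, weighted by $x^i y^j$, should equal $\sum_k k \cdot f_{i,j}^{(k)} x^i y^j$ where $f_{i,j}^{(k)}$ counts fish with $k$ tails. If the generating series from Section~\ref{sec:gfP1} already tracks the number of tails via some auxiliary variable $t$, then marking a tail corresponds to applying $\partial_t$ and setting $t=1$. So the first concrete step is to identify, inside the explicit parametrization produced for Theorem~\ref{thm:count}, the role of the tail-counting variable and to extract $\partial_t F\big|_{t=1}$.

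Second, I would verify the formula by a direct ratio computation. Write $g_{i,j}$ for the coefficient~\eqref{for:for2} and $f_{i,j}$ for the coefficient~\eqref{for:for1}. Computing the ratio $g_{i,j}/f_{i,j}$ gives a rational function in $i,j$; expanding the factorials, $\binom{2i+j-3}{j-1}/\binom{2i+j-2}{j-1} = \frac{2i-1}{2i+j-2}$ and similarly for the other factor, so $g_{i,j}/f_{i,j} = \frac{ij(2i+2j-3)}{(2i+j-2)(2j+i-2)}$. This clean ratio strongly suggests that the marked-tail series is obtained from $F$ by a simple differential-and-substitution operation in the parametrizing variables, and checking that the operation reproduces this ratio is the analytic heart of the argument.

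Third, using the rational parametrization of $F$ established for Theorem~\ref{thm:count} (presumably $x$ and $y$ expressed as explicit rational or algebraic functions of catalytic parameters), I would carry out the differentiation with respect to the tail variable and then perform Lagrange inversion, or equivalently a direct coefficient extraction via a double contour integral, to read off the coefficient of $x^i y^j$ in $\partial_t F|_{t=1}$. The main obstacle I expect is bookkeeping: keeping the tail variable $t$ coherently threaded through the functional equation from Section~\ref{sec:deco} and through its solution, so that the specialization at $t=1$ after differentiation lands exactly on the ratio $\frac{ij(2i+2j-3)}{(2i+j-2)(2j+i-2)}$ times $f_{i,j}$. Once the parametrization is in hand, the extraction should be a Lagrange-inversion computation of the same flavor as the one proving Theorem~\ref{thm:count}, so the difficulty is conceptual alignment of the tail statistic with the catalytic decomposition rather than any new technique.
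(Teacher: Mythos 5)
Your high-level strategy --- interpret the marked tail as a derivative in the tail variable, then extract coefficients by bivariate Lagrange inversion --- is indeed the skeleton of the paper's proof, but the decisive step is missing from your sketch, and it is precisely the step that makes the computation feasible. In the paper's notation ($y$ marks tails, $a,b$ mark the right and left sizes), one has $P^> = y\frac{\partial}{\partial y}(yP(1))$, and the paper does \emph{not} compute this derivative by brute force from the algebraic solution. Instead it differentiates the functional equation~\eqref{eqn:eq1} with respect to $y$ and observes that $\frac{\partial}{\partial y}P(u)$ appears multiplied by the \emph{same} kernel factor that was cancelled at $u=U$ in the proof of Theorem~\ref{thm:gfP1}; evaluating at $u=U$ kills that term and yields the identity~\eqref{eq:markedPU}: $P^> = P(U) = B$. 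So the marked-tail generating series is not some complicated derivative of $P(1)$ --- it \emph{equals} the parametrizing series $B$ itself, and Theorem~\ref{thm:marked} then follows by a single application of bivariate Lagrange inversion to $B = t(1+\bar R)(1+\bar S)$ with the system $\bar R = ta(1+\bar R)(1+\bar S)^2$, $\bar S = tb(1+\bar R)^2(1+\bar S)$ at $t=y=1$. Your plan instead proposes to thread the tail variable through the parametrization and differentiate; since $B$ depends on $y$ implicitly through the quartic equation~\eqref{eqn:eq6U}, this route forces you to compute $\partial_y B$ implicitly and Lagrange-invert a much messier rational expression. That is not unworkable in principle (done honestly, it would rediscover the identity $P^>=B$), but your proposal never carries it out, so the actual content of the proof is absent.

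A second concrete problem: your step 2, the ratio computation $g_{i,j}/f_{i,j} = \frac{ij(2i+2j-3)}{(2i+j-2)(2j+i-2)}$, is arithmetically correct but has no probative force. Knowing the ratio of the target coefficients to the known coefficients of Theorem~\ref{thm:count} does not identify any operation on generating series, and calling its verification ``the analytic heart of the argument'' inverts the logic: the formula~\eqref{for:for2} is what must be proved, so it cannot serve as the criterion against which the construction is checked. To close the gap you need either the kernel cancellation argument giving $P^> = P(U) = B$ (the paper's route, via \cite{MBMA}), or a complete implicit differentiation of the four-variable system of Theorem~\ref{thm:gfP1} followed by explicit coefficient extraction --- neither of which your sketch supplies.
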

All these results confirm the apparently close relation of fighting
fish to the well studied combinatorial structures known as \emph{non
  separable planar maps} \cite{B}, \emph{two stack sortable
  permutations} \cite{west,Zeilb,bona}, and \emph{left ternary trees}
\cite{DDP,JS}.  The closest link appears to be between fighting fish
and left ternary trees, that is, ternary trees whose vertices all have
non negative abscissa in the natural embedding \cite{kuba}. We prove in
Section~\ref{sec:ternary} the following theorem, which was conjectured
in \cite{fighting-fish}:
\begin{thm}\label{thm:fin-core}
The number of fighting fish with size $n$ and fin length $k$ is equal
to the number of left ternary trees with $n$ nodes, $k$ of which are
accessible from the root using only left and middle branches.
\end{thm}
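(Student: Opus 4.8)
The plan is to prove the equidistribution at the level of generating series, following the strategy announced in the abstract: produce a \emph{refined} generating series for left ternary trees and match it against the fighting fish series of Section~\ref{sec:gfP1}. Concretely, let $F(t,s)=\sum_{n,k} f_{n,k}\,t^n s^k$, where $f_{n,k}$ is the number of fighting fish of size $n$ with fin length $k$. Since the decomposition of Section~\ref{sec:deco} was engineered to keep track of the fin, I may assume that the explicit (parametric) expression for $F(t,s)$ is already in hand. The goal is then to establish $F(t,s)=L(t,s)$, where $L(t,s)=\sum_{n,k}\ell_{n,k}\,t^n s^k$ and $\ell_{n,k}$ counts left ternary trees with $n$ nodes whose core—the set of nodes reachable from the root using only left and middle branches—has size $k$; extracting coefficients then yields $f_{n,k}=\ell_{n,k}$, which is exactly the claim.

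First I would set up a functional equation for left ternary trees by peeling along the wall that the positivity constraint controls. Writing $T_a$ for the series of ternary trees rooted at a node of abscissa $a$ with all abscissas nonnegative, the embedding rule gives the infinite nonlinear system $T_a = 1 + t\,T_{a-1}T_aT_{a+1}$, with the boundary condition $T_{-1}=1$ enforcing nonnegativity. The left and middle branches are precisely the two that do not decrease the abscissa, hence the ones generating the core; marking the core nodes by a secondary variable $s$ while introducing a catalytic variable $u$ for the root abscissa should collapse this system into a single equation of the form $\Phi(T(t,s,u),T(t,s,0);u)=0$ with one catalytic variable $u$.

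The technical heart is to solve, or circumvent, this catalytic equation. Because the embedding recursion is cubic, the constraint couples neighbouring abscissas through the term $T_{a-1}T_aT_{a+1}$ and prevents a direct linear-kernel reduction, so the plain kernel method does not apply verbatim. I would therefore look for a rational parametrization, guided by the shape of the fighting fish parametrization of Section~\ref{sec:gfP1}, and verify it by substitution. The cleaner alternative is to show directly that $L(t,s)$ satisfies the \emph{same} catalytic equation that the wasp-waist decomposition produces for $F(t,s)$; then the two series coincide by uniqueness of the power-series solution with the correct low-order terms, and one never has to solve either equation in closed form. I expect this common-equation route to be the one worth pushing hardest.

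The main obstacle I anticipate is precisely this step: handling the positivity constraint on ternary trees while simultaneously tracking the core statistic. Getting the bookkeeping right so that $s$ genuinely marks the full left-and-middle core (and not merely a spine), and then either finding the correct algebraic substitution or identifying the common functional equation, is where the real work lies. Once $L(t,s)$ and $F(t,s)$ are matched, two consistency checks guard against error: setting $s=1$ must recover the univariate count of Theorem~\ref{thm:fighting-fish}, and summing $\ell_{n,k}$ over $k$ must reproduce the known enumeration of left ternary trees.
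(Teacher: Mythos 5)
Your setup is sound as far as it goes — the system $T_j(u)=1+tu\,T_{j+1}(u)\,T_j(u)\,T_{j-1}$ (with $u$ marking core nodes, and the right subtree counted without $u$ since the core stops there) is exactly the paper's starting point, as is the idea of matching against the fish series of Corollary~\ref{cor:Pu}. But there is a genuine gap at what you yourself call the technical heart. Your main route assumes the infinite system ``should collapse into a single equation $\Phi(T(t,s,u),T(t,s,0);u)=0$ with one catalytic variable $u$ for the root abscissa.'' No such collapse is available: the coupling $T_{j+1}(u)T_j(u)T_{j-1}$ is multiplicative in three consecutive indices, so summing $u^jT_j(u)$ over $j$ does not close (the trick only works for recursions linear in the indexed family), and no self-contained equation for $T_0(u)$ alone exists — its defining relation involves $T_1(u)$, which involves $T_2(u)$, and so on. For the same reason your fallback of showing that the tree series satisfies the \emph{same} catalytic equation as the wasp-waist equation~\eqref{eqn:eq0} cannot be run: on the tree side there is nothing to plug into, and one cannot even state the verification without first controlling the whole family $(T_j(u))_{j\geq -1}$.

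The missing idea, which is the entire content of the paper's proof, is an explicit Ansatz for that whole family at once: building on Di Francesco's and Kuba's guess-and-check method, the paper posits
\begin{equation*}
T_j(u)=T(u)\,\frac{H_j(u)}{H_{j-1}(u)}\,\frac{1-X^{j+2}}{1-X^{j+3}},
\qquad
H_j(u)=(1-X^{j+1})XT(u)-(1+X)(1-X^{j+2}),
\end{equation*}
with $X=B(1+X+X^2)$, and verifies by direct polynomial manipulation that it satisfies the recursion for every $j\geq-1$ (using Kuba's unrefined formula of Theorem~\ref{thm:kuba} for the factor $T_{j-1}$, and the boundary values $T_{-2}=0$, $T_{-1}(u)=1$); uniqueness of the power-series solution of the full system then identifies the Ansatz with the tree series. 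Only after this does the comparison with fish become a finite computation: specializing to $j=0$ gives $T_0(u)=T(u)(1+B)-T(u)^2B$, which coincides with $1+P(u)$ by Equation~\eqref{eqn:eqPuTu}. Your instinct that a parametrization ``guided by the fish side'' should be verified by substitution is the right germ — but it must be carried out for all $j$ simultaneously, with the correct two-parameter structure in $T(u)$ and $X$; guessing and checking $T_0(u)$ alone proves nothing. Your consistency checks at $s=1$ are fine but do not substitute for this step.
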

We prove this theorem by an independent computation of the generating
series of left ternary trees with respect to $n$ and $k$ (see
Theorem~\ref{thm:generalizedKuba}), building on Di Francesco's method
\cite{diFrancesco2} for counting positively labeled trees.  As
discussed in Section~\ref{sec:ternary} we conjecture that
Theorem~\ref{thm:fin-core} extends to take into account the left and
right size and the number of tails but we have only been able to prove
this bijectively in the case of fighting fish with at most two tails,
and in the case of fighting fish with $h$ tails but at most $h+2$
lower edges that are not in the fin.

\section{A wasp-waist decomposition}\label{sec:deco}

\begin{thm}\label{thm:wasp-waist}
Let $P$ be a fighting fish. Then exactly one of the following cases (A), (B1), (B2), (C1), (C2) or (C3) occurs:
\begin{description}
\item{(A)} $P$ consists of a single cell;
\item{(B)} $P$ is obtained from a smaller size fighting fish $P_1$:
\begin{description}
\item{(B1)} by gluing the right lower edge of a new cell to the upper
  left edge of the head of $P_1$ (Figure \ref{fig:waist_fish}~(B1));
\item{(B2)} by gluing every left edge of the fin of $P_1$ to the upper
  right edge of a new cell, and gluing the right lower edge and the
  upper left edge of all pairs of adjacent new cells (Figure
  \ref{fig:waist_fish}~(B2));
\end{description}
\item{(C)} $P$ is obtained from two smaller size fighting fish, $P_1$ and $P_2$:
\begin{description}
\item{(C1)} by performing to $P_1$ the operation described in~(B2) and
  then gluing the upper left edge of the head of $P_2$ to the last
  edge of the fin of $P_1$ (Figure \ref{fig:waist_fish}~(C1));
\item{(C2)} by choosing a right edge $r$ on the fin of $P_1$ (last
  edge of the fin excluded) and gluing every left edge preceding $r$
  on the fin to the upper right edge of a new cell and, as above,
  gluing the right lower edge and the upper left edge of every pair of
  adjacent new cells; Then, gluing the upper left edge of the head of
  $P_2$ to $r$ (Figure \ref{fig:waist_fish}~(C2));
\item{(C3)} by choosing a left edge $\ell$ on the fin of $P_1$ and gluing
  every left edge of the fish fin preceding $\ell$ (included) to the
  upper right edge of a new cell and, as above, gluing the right lower
  edge and the upper left edge of every pair of adjacent new
  cells; Then, gluing the upper left edge of the head of $P_2$ to the
  right lower edge of the cell glued to $\ell$ (Figure
  \ref{fig:waist_fish}~(C3)).
\end{description}
\end{description}
Moreover each of the previous operations, when applied to arbitrary fighting fish $P_1$ and if necessary $P_2$, produces valid a fighting fish. 
\end{thm}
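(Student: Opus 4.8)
The plan is to prove the two assertions separately: first that every fighting fish falls into at least one of the listed cases, together with mutual exclusivity, and second the ``Moreover'' claim that each of the six constructions produces a genuine fighting fish. I would treat the validity assertion first, since the inverse (peeling) operations used in the classification are cleaner to describe once we know the forward operations are legal.

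For validity I would exhibit, for each operation, an order in which the cells of the output $P$ can be produced by the three elementary moves of the iterative construction, given such orders for $P_1$ and, in the (C) cases, $P_2$. The useful fact is that by definition every edge of the fin of $P_1$ is free in $P_1$, so one may interleave the additions along the fin: each new cell of the added layer is first attached to its predecessor (or to the extra cell) by the first or second elementary move, while its remaining gluing to a fin edge of $P_1$ is realized later, exactly when the corresponding cell of $P_1$ is added by the first move, or when a pocket between a new cell and a cell of $P_1$ is closed by the third (pocket-filling) move. Because attaching cells along the fin of $P_1$ never occupies the right edges of $P_1$ that its own construction later uses, the interleaved sequence stays legal and $P$ is a fighting fish; the head of $P_2$ and the extra cell of (B1) are attached by the second move.

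For the classification I would peel cells off $P$ according to the local structure at its head $H$. If $P$ is a single cell we are in case (A). Otherwise I distinguish on the right upper edge of $H$: if it is free, then $H$ is attached to the rest of $P$ only through its right lower edge, and deleting $H$ yields a smaller fish $P_1$ whose head is the cell formerly glued there --- this is exactly the inverse of (B1). If instead the right upper edge of $H$ is glued, then $H$ is the extremal cell of a maximal layer of cells running along the fin, each glued to the next by a matching right-lower/upper-left pair; removing this whole layer should recover $P_1$ and, in the (C) cases, detach a second fish $P_2$. Which of (B2), (C1), (C2), (C3) applies is then read off from where the layer terminates relative to the fin of $P_1$: at the last fin edge with nothing further attached (B2), at the last fin edge with a fish hanging off it (C1), at an interior right edge $r$ (C2), or at an interior left edge $\ell$ (C3), the point of attachment of the head of $P_2$ determining $r$ or $\ell$ uniquely.

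The main obstacle will be the geometric bookkeeping in this last step. I must show that the maximal fin layer is well defined, that peeling it off leaves configurations $P_1$ and $P_2$ that are themselves valid fighting fish (each reachable by the iterative process, each with a single head carrying two free left edges), and that the terminal structure of the layer lands in exactly one of the four sub-cases. Establishing this amounts to a careful analysis of the cells incident to the fin of $P$ and to checking that the peeling is genuinely inverse to the forward operation in each case. Mutual exclusivity then follows because the cases are separated by disjoint combinatorial conditions: whether the head's right upper edge is free, whether the layer spans the whole fin or a proper prefix, whether that prefix ends at a right or a left edge, and whether a second fish $P_2$ is present.
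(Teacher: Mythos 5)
Your overall architecture mirrors the paper's: the validity half (interleaving a growth sequence of $P_1$ with insertions of the new layer cells, realized by the second and pocket-filling third moves, then growing $P_2$ from its attached head) is essentially the argument the paper gives, and your direct peeling of the head $H$ when its right upper edge is free is a correct, if differently packaged, treatment of case (B1) (the paper recovers (B1) as the subcase where its residual fish $P_1$ is empty). The genuine gap is in your definition of the layer to be peeled in the remaining cases. You define it as the maximal chain from $H$ of cells ``each glued to the next by a matching right-lower/upper-left pair''. That chain is not the removable layer: it both undershoots and overshoots. Undershoot: let $P_1$ consist of a head $a$, a cell $b$ glued to the right upper edge of $a$, and a cell $c$ glued to the right lower edge of $b$; its fin reads left, right, left, right. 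Applying (B2) adds two cells, $n_1$ (the new head, under the left lower edge of $a$) and $n_2$ (under the left lower edge of $c$), and $n_1$ and $n_2$ are \emph{not} glued to each other: the right lower edge of $n_1$ stays free, while the upper left edge of $n_2$ is glued to the right lower edge of $a$, i.e.\ to the right fin edge at the valley of the fin of $P_1$ (this gluing is forced, since otherwise $n_2$ would be a second cell with two free left edges). Your chain from $H=n_1$ therefore stops after one cell, although the correct layer is $\{n_1,n_2\}$ and the case is (B2) with no $P_2$ at all; read literally, your rule would misclassify this fish as having a fish attached at the valley edge, even though cutting there disconnects nothing. Overshoot: in case (C3) the head of $P_2$ is, by construction, glued by its upper left edge to the right lower edge of the last layer cell --- exactly a right-lower/upper-left pair --- so your maximal chain absorbs the head of $P_2$ (and possibly further cells of $P_2$) into the layer.

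The missing idea is a global separation criterion, which is the heart of the paper's proof: call an edge a \emph{cut edge} if cutting $P$ along it disconnects it, and define the removable layer by scanning the fin cells of $P$ (the cells incident to a left edge of the fin of $P$) from left to right, stopping at the first one whose left upper side is a cut edge; that cell is the head of $P_2$, and cutting there yields the two components to be further peeled. Note that taking ``all fin cells of $P$'' without this stopping rule also fails, because in cases (C2) and (C3) the fin of $P$ continues into $P_2$, so the fin cells of $P$ include fin cells of $P_2$. No purely local condition (gluing pattern at a cell, or whether the layer ends at a left or right edge) can separate a layer cell glued over a valley from the head of $P_2$ glued at $r$ in case (C2): in both situations a cell's upper left edge is glued to a right edge of the fin of $P_1$, and only connectivity distinguishes the two. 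Without the cut-edge test your layer is not well defined, $P_2$ is not located, and the mutual exclusivity you claim for (B2), (C1), (C2), (C3) --- phrased in terms of that layer --- does not get off the ground; repairing the proof essentially means importing this criterion, at which point your argument coincides with the paper's.
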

\begin{figure}
\centering
\includegraphics[width=1\textwidth]{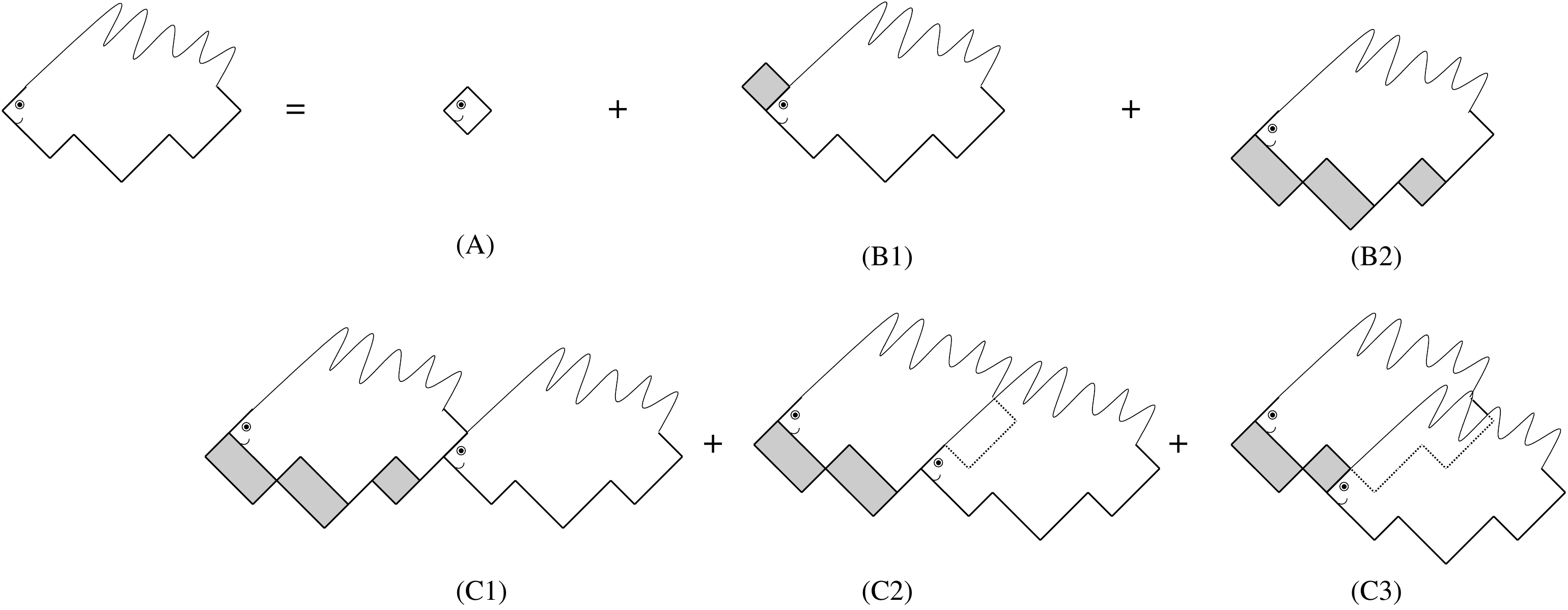}
\caption{The wasp-waist decomposition.}
\label{fig:waist_fish}
\end{figure}

Observe that Cases (A), (B1) and (B2) could have been alternatively
considered as degenerate cases of Case (C1) where $P_1$ or $P_2$ would
be allowed to be empty. Staircase polyominoes are exactly the
fighting fish obtained using only Cases (A), (B1), (B2) and (C1).

\begin{proof} Omitted (Appendix~\ref{ap:deco}).
\end{proof}

Let $P(t,y,a,b;u)=\sum_{P} t^{\mathrm{size} (P)-1} y^{\mathrm{tails}
  (P)-1} a^{\mathrm{rsize} (P)-1} b^{\mathrm{lsize} (P)-1}
u^{\mathrm{fin} (P)-1}$ denote the generating series of fighting fish
with variables $t,y,a,b$ and $u$ respectively marking the size, the
number of tails, the right size, the left size, the fin length, all
decreased by one.

\begin{cor}\label{cor:gf} 
The generating series $P(u)\equiv P(t,y,a,b;u)$
of 
fighting fish satisfies
the equation
\begin{equation}\label{eqn:eq0}
P(u)=tu(1+aP(u))(1+bP(u))+ytabuP(u)\frac{P(1)-P(u)}{1-u}.
\end{equation}
\end{cor}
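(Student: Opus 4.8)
The plan is to read the equation off the wasp-waist decomposition of Theorem~\ref{thm:wasp-waist}: since the six cases (A), (B1), (B2), (C1), (C2), (C3) partition all fighting fish and each operation is reversible, $P(u)$ is the sum of the six generating-series contributions. For each case I would express the five tracked statistics of the output fish $P$ — size, number of tails, right size, left size and fin length — in terms of those of the input fish $P_1$ (and of $P_2$, in the (C) cases), and then read off the monomial factor relative to the input series.

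First I would dispose of the base and one-input cases, which together yield all of $tu(1+aP(u))(1+bP(u))$ except its quadratic term. Case (A) is the single cell, with size $2$, one tail, one left and one right lower free edge and fin length $2$, hence the monomial $tu$. In case (B1) a new head is glued on top of $P_1$, adding one free left lower edge and one fin edge and leaving the right size and the tails untouched, so it multiplies the series of $P_1$ by $tbu$. Case (B2) glues a new cell to every left fin edge of $P_1$ and glues consecutive new cells together; here I would make the key observation that although many cells are added, every glued left fin edge simultaneously ceases to be free, so the net change of size is exactly $+1$; the last new cell contributes one free right lower edge, the left size and the tails are unchanged, and the fin grows by one, giving the factor $tau$. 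Summing (A), (B1) and (B2) gives $tu+tub\,P(u)+tua\,P(u)$.

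Next come the two-input cases. In case (C1) one applies the (B2) step to $P_1$ and glues the head of $P_2$ onto the last fin edge; that edge ends at the terminal tail, so this tail is absorbed, and consequently the $y$-exponents of $P_1$ and $P_2$ add with no surplus while the remaining statistics add with the single extra unit carried by the (B2) cell. This produces the quadratic term $tuab\,P(u)^2$ and completes the factorization $tu(1+aP(u))(1+bP(u))$. Cases (C2) and (C3) instead attach $P_2$ at an interior edge of the fin after applying (B2) only to the fin edges preceding that edge; now the terminal tail survives, so an extra factor $y$ appears, and the chosen position $p$ records how much of the fin of $P_1$ persists into the fin of $P$, contributing $u^{p}$. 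I would then note that the fin reaches its terminal tail from below along a right lower edge, so the last fin edge is always a right edge; hence (C2), ranging over the non-final right fin edges, and (C3), ranging over the left fin edges, together range over exactly the $\phi-1$ non-terminal edges of a fin of length $\phi$. The position sum is therefore $1+u+\cdots+u^{\phi-2}=\frac{1-u^{\phi-1}}{1-u}$, which summed over $P_1$ becomes $\frac{P(1)-P(u)}{1-u}$; multiplying by the series $P(u)$ of $P_2$ and the prefactor $ytabu$ yields the second term of~\eqref{eqn:eq0}.

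The statistic bookkeeping is routine except for the fin length, which I expect to be the only delicate point. The two checks to get right are the size cancellation in (B2) — that deleting the glued left fin edges from the free-edge count exactly offsets the many added cells, leaving a net increase of one — and the translation in (C2)/(C3) of the chosen-edge position into the correct power of $u$, so that the two cases glue together into the single series $\frac{P(1)-P(u)}{1-u}$; here the fact that the terminal fin edge is a right edge is precisely what makes the exclusion in (C2) leave $\phi-1$ rather than $\phi$ admissible edges. Once these are verified, adding the six contributions gives exactly~\eqref{eqn:eq0}.
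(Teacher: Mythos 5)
Your proposal is correct and follows essentially the same route as the paper's own proof: a case-by-case translation of the wasp-waist decomposition into monomial factors ($tu$, $tub\,P(u)$, $tua\,P(u)$, $tuab\,P(u)^2$), with the key point — which the paper also singles out as the only difficult one — being that Cases (C2) and (C3) together range over the $\mathrm{fin}(P_1)-1$ non-terminal fin edges and hence contribute the geometric sum yielding $ytabu\,P(u)\frac{P(1)-P(u)}{1-u}$. Your observation that the last fin edge is always a right edge is exactly the right justification for this count, and your flagged verifications (the net size increase of $+1$ in (B2), where newly glued old right fin edges are offset by newly freed right lower edges of the added cells, and the position-to-$u$-power translation) are the same bookkeeping the paper leaves implicit.
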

\begin{proof} This is a direct consequence of the previous theorem, 
details are omitted (Appendix~\ref{ap:deco}).
\end{proof}

\section{Enumerative results for fish}\label{sec:count}
\subsection{The algebraic solution of the functional equation}\label{sec:gfP1}
The equation satisfied by fighting fish is a combinatorially funded
polynomial equation with one catalytic variable: this class of
equations was thoroughly studied by Bousquet-M\'elou and Jehanne
\cite{MBMA} who proved that they have algebraic solutions.
\begin{thm}\label{thm:gfP1}
Let $B\equiv B(t;y,a,b)$ denote the unique power series solution of the equation
\begin{equation}\label{eqn:eq6U}
B=t\left(1+y\frac{abB^2}{1-abB^2}\right)^2(1+aB)(1+bB).
\end{equation}
Then the generating series $P(1)\equiv P(t;y,a,b,1)$ of fighting fish can be expressed as 
\begin{equation}\label{eqn:eq8U}
P(1)=B-\frac{yabB^3(1+aB)(1+bB)}{(1-abB^2)^2}.
\end{equation}
\end{thm}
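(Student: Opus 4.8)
The plan is to solve the functional equation \eqref{eqn:eq0} of Corollary~\ref{cor:gf} by the Bousquet-M\'elou--Jehanne method for polynomial equations with one catalytic variable, whose applicability was noted just before the statement. First I would clear the denominator $1-u$ in \eqref{eqn:eq0} and collect powers of $P(u)$, rewriting the equation as $\mathcal E(u,P(u),P(1))=0$, where
\begin{equation*}
\mathcal E(u,P,P_1)=tabu(u-1+y)P^2+\bigl[(1-u)-tu(1-u)(a+b)-ytabuP_1\bigr]P-tu(1-u)
\end{equation*}
is quadratic in its middle argument $P$ and contains $P_1=P(1)$ only linearly. The unknowns to be determined are the power series $P(u)$ (in $t$, with coefficients rational in $a,b,y$) and its specialization $P_1=P(1)$.

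Next I would run the kernel argument. Differentiating $\mathcal E(u,P(u),P_1)=0$ with respect to $u$ gives $\partial_u\mathcal E+\partial_P\mathcal E\cdot P'(u)=0$; at a power series $u=U(t)$ where the kernel $\partial_P\mathcal E$ vanishes, finiteness of $P'(U)$ forces $\partial_u\mathcal E=0$ there as well. This yields the Bousquet-M\'elou--Jehanne system of three equations $\mathcal E=\partial_P\mathcal E=\partial_u\mathcal E=0$, all evaluated at $(U,S,P_1)$ with $S:=P(U)$, in the three unknown series $U$, $S$ and $P_1$.

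The heart of the proof is to solve this system through a single uniformizing series. Since $\mathcal E$ is quadratic in $P$, the two conditions $\mathcal E=0$ and $\partial_P\mathcal E=0$ express that $S$ is the double root and that the leading and constant coefficients satisfy $-tu(1-u)=tabu(u-1+y)S^2$; dividing by $tu$ and solving for $u$ produces the clean relation
\begin{equation*}
U=1+\frac{yabS^2}{1-abS^2}.
\end{equation*}
Setting $B:=S$, this is precisely the factor appearing squared in \eqref{eqn:eq6U}. I would then substitute $U$ back into the two remaining equations: $\partial_u\mathcal E=0$ should collapse, after clearing $1-abB^2$ and factoring, to the defining equation \eqref{eqn:eq6U} in the compact form $B=tU^2(1+aB)(1+bB)$, while $\partial_P\mathcal E=0$ (being linear in $P_1$) should determine $P_1$ and, upon inserting $U$ and \eqref{eqn:eq6U}, simplify to the claimed closed form \eqref{eqn:eq8U}. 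Finally I would verify that \eqref{eqn:eq6U} admits a unique power series solution with $B=t+O(t^2)$, so that $U=1+O(t^2)$ is an admissible value to substitute into $P(u)$ and the correct branch of the algebraic system is selected; this pins down $P_1$ as the genuine generating series of fighting fish.

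The main obstacle is the elimination in full generality. With the three extra parameters $y,a,b$ present, recognizing the parametrization $U=1+yabB^2/(1-abB^2)$ and checking that $\partial_u\mathcal E=0$ and $\partial_P\mathcal E=0$ reduce exactly to \eqref{eqn:eq6U} and \eqref{eqn:eq8U} is a bulky if routine computation, best organized as a resultant or Gr\"obner elimination of the auxiliary unknowns $U$ and $S$. As a sanity check I would specialize $a=b=y=1$, where \eqref{eqn:eq6U} degenerates to $B(1-B)^2=t$ and \eqref{eqn:eq8U} to $P(1)=B-B^3/(1-B)^2$; expanding the latter recovers the enumeration of Theorem~\ref{thm:fighting-fish}, confirming both the parametrization and the branch.
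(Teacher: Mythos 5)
Your proposal is correct and follows essentially the same route as the paper: the Bousquet-M\'elou--Jehanne kernel method applied to Equation~\eqref{eqn:eq0}, producing the identical three-equation system in $U$, $P(U)$ and $P(1)$, the same key parametrization $U=1+yab B^2/(1-abB^2)$ (which the paper derives by comparing Equations~\eqref{eqn:eq1U} and~\eqref{eqn:eq2U}, and you via the double-root condition $\gamma=\alpha S^2$ on the quadratic), the same compact relation $B=tU^2(1+aB)(1+bB)$, and the same final determination of $P(1)$, with your specialization $a=b=y=1$ check matching the paper's Theorem~\ref{thm:fighting-fish}. The only cosmetic slip is that $\partial_u\mathcal{E}=0$ alone still involves $P(1)$, so the collapse to $B=tU^2(1+aB)(1+bB)$ requires combining it with $\mathcal{E}=0$ (as the paper does to obtain Equation~\eqref{eqn:eq4U}), but this falls squarely within the routine elimination you already flag and organize correctly.
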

This theorem easily implies Theorem~\ref{thm:count} using Lagrange
inversion (Appendix~\ref{ap:lagrange}).
\begin{proof}[Proof of Theorem~\ref{thm:gfP1}]
Our proof follows closely the approach of \cite{MBMA}, so we omit the
details (Appendix~\ref{ap:gf}) and only present the strategy: Rewrite
Equation~\eqref{eqn:eq0} as
\begin{equation}\label{eqn:eq1}
(u-1)P(u)=tu(u-1)(1+aP(u))(1+bP(u))+ytuabP(u)(P(u)-P(1)).
\end{equation}
and take the derivative with respect to $u$:
\begin{align*}
\MoveEqLeft
P(u)-t(2u-1)(1+aP(u))(1+bP(u))-ytabP(u)(P(u)-P(1)) \\
&=-\frac{\partial}{\partial u}P(u)\cdot \left(u-1-tu(u-1)(a+b+2abP(u))-ytuab(2P(u)-P(1))\right)
\end{align*}
Now there clearly exists a unique power series $U$ that cancels the
second factor in the right hand side of the previous equation: $U$ is
the unique power series root of the equation
\begin{equation}\label{eqn:eq2U}
U-1=tU(U-1)(a+b+2abP(U))+ytabU(2P(U)-P(1)).
\end{equation}
Since $U$ must also cancels the left hand side,
\begin{equation}\label{eqn:eq3U}
P(U)=t(2U-1)(1+aP(U))(1+bP(U))+ytabP(U)(P(U)-P(1)), 
\end{equation}
and, for $u=U$, Equation~\eqref{eqn:eq1} reads
\begin{equation}\label{eqn:eq1U}
(U-1)P(U)=tU(U-1)(1+aP(U))(1+bP(U))+ytUabP(U)(P(U)-P(1)).
\end{equation}
Solving the resulting system of 3 equations for the three unknown 
$U$, $P(U)$ and $P(1)$ yields the theorem, with $P(U)=B$.
\end{proof}

The full series $P(u)$ is clearly algebraic of degree at most 2 over
$\mathbb{Q}(u,B)$, but it admits in fact a parametrization directly
extending the one of the theorem. 
\begin{cor}\label{cor:Pu}
Let $B(u)$ be the unique power series solution of the equation:
\begin{equation}
B(u)=tu\left(1+aB(u)+yaB(u)\frac{bB(1+aB)}{1-abB^2}\right)
\left(1+bB(u)+ybB(u)\frac{aB(1+bB)}{1-abB^2}\right)
\end{equation}
then
\begin{equation*}
P(u)=B(u)
-yabB(u)^2B\frac{(1+aB)(1+bB)(1-abB^2+yabB^2)}
{(1-abB^2)^2(1-abB(u)B+yabB(u)B)}.
\end{equation*}
\end{cor}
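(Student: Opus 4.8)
The plan is to prove the corollary by \emph{direct verification}: I will show that the series $P(u)$ given by the stated formula solves the functional equation~\eqref{eqn:eq0}, and then invoke uniqueness. Indeed, extracting the coefficient of $t^{n}$ in~\eqref{eqn:eq0} (equivalently~\eqref{eqn:eq1}) expresses it in terms of coefficients of lower $t$-order, because of the explicit factor $t$ on the right-hand side and because $\frac{P(1)-P(u)}{1-u}$ is a power series in $u$; hence~\eqref{eqn:eq0} determines $P(u)$ uniquely as a formal power series in $t$ with polynomial-in-$u$ coefficients. It therefore suffices to check that the candidate is such a power series, vanishing at $t=0$, and satisfying~\eqref{eqn:eq0} with the value $P(1)$ supplied by Theorem~\ref{thm:gfP1}.

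The key simplification is to use $v:=B(u)$ as a uniformizing parameter while treating $B=B(1)$ as a constant. First I would record that the defining equation for $B(u)$ collapses to
\[
v \;=\; tu\,(1+a\alpha v)(1+b\beta v),
\qquad
\alpha := 1+y\frac{bB(1+aB)}{1-abB^2},\quad
\beta := 1+y\frac{aB(1+bB)}{1-abB^2},
\]
with $\alpha,\beta$ depending only on $B$. This lets me solve rationally $u=v/\bigl(t(1+a\alpha v)(1+b\beta v)\bigr)$, and likewise rewrite the candidate as $P(u)=v-Kv^2/(1-\delta v)$ with $\delta:=ab(1-y)B$ and $K:=yabB(1+aB)(1+bB)(1-abB^2+yabB^2)/(1-abB^2)^2$, both constant in $v$. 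Since $v=B(u)=O(t)$, the candidate is automatically a power series in $t$ vanishing at $t=0$.

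With these substitutions the claim becomes a rational identity in the single variable $v$: every occurrence of $u$, of $P(u)$, of $1-u=\bigl(t(1+a\alpha v)(1+b\beta v)-v\bigr)/\bigl(t(1+a\alpha v)(1+b\beta v)\bigr)$, and of the constant $P(1)$ is now explicit in $v$ and $B$. Clearing the three denominators $1-\delta v$, $(1+a\alpha v)(1+b\beta v)$ and $t(1+a\alpha v)(1+b\beta v)-v$ turns~\eqref{eqn:eq0} into a polynomial identity in $v$, whose coefficients I would reduce using the single relation~\eqref{eqn:eq6U} satisfied by $B$ (most efficiently by using it once to trade $t$ for $B$). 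I would then verify the boundary specialization $u=1$, i.e.\ $v=B$: there the factor $(1-abB^2+yabB^2)$ cancels and the formula collapses to exactly~\eqref{eqn:eq8U}, confirming both that $P(1)$ agrees with Theorem~\ref{thm:gfP1} and that the power-series branch has been selected.

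The main obstacle is the bookkeeping of this final polynomial identity rather than any conceptual point: the three nested denominators together with the constants $\alpha,\beta,\delta,K$, themselves rational in $B$, produce a sizeable expression, and the art is to organize the elimination so that a single application of~\eqref{eqn:eq6U} reduces everything to a tractable identity. A useful internal check is that both $P(1)-P(u)$ and $1-u$ vanish at $v=B$, so the quotient $\frac{P(1)-P(u)}{1-u}$ in~\eqref{eqn:eq0} is regular there; factoring out $(B-v)$ before clearing denominators should keep the computation to a manageable size.
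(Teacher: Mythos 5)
The paper offers no proof of Corollary~\ref{cor:Pu} at all: the statement is given bare, justified only by the preceding remark that $P(u)$ is algebraic of degree at most $2$ over $\mathbb{Q}(u,B)$ --- the implicit route being to solve the quadratic equation~\eqref{eqn:eq1} for $P(u)$, with $P(1)$ known from Theorem~\ref{thm:gfP1}, and to observe that the substitution $u\mapsto B(u)$ rationalizes the discriminant. Your verify-then-invoke-uniqueness scheme is therefore an independent route, and its logic is sound: extracting $[t^n]$ in~\eqref{eqn:eq0} does give a recursion (the divided difference $\frac{P(1)-P(u)}{1-u}$ maps polynomial coefficients in $u$ to polynomial coefficients in $u$, and the right-hand side carries an explicit factor $t$), so the equation with $P(1)$ frozen at the value~\eqref{eqn:eq8U} has a unique power-series solution, and the true generating series is one such solution by Theorem~\ref{thm:gfP1}. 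Your normalizations also check out against the paper: your $\alpha$ and $\beta$ are $1+y\bar S$ and $1+y\bar R$ in the notation of Appendix~\ref{ap:lagrange}; at $u=1$ one has $1+a\alpha B=(1+aB)\bigl(1+\tfrac{yabB^2}{1-abB^2}\bigr)$ and its mirror image, so $v=B$ solves the defining equation of $B(u)$ at $u=1$ by~\eqref{eqn:eq6U}, whence $B(1)=B$ by uniqueness; and $1-\delta B=1-abB^2+yabB^2$ indeed cancels the matching factor of $K$, collapsing the candidate to~\eqref{eqn:eq8U} exactly as you claim. A further corroboration you could cite: at $y=a=b=1$ your data specialize to $\delta=0$, $K=B/(1-B)^2$, $\alpha=\beta=1/(1-B)$, which reproduce verbatim the identity~\eqref{eqn:eqPuTu} that the paper itself uses in Section~\ref{sec:ternary}. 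The one caveat is that the step carrying all the content --- the rational identity in $v$ over $\mathbb{Q}(a,b,y)(B)$ obtained after clearing the three denominators and trading $t$ for $B$ via~\eqref{eqn:eq6U} --- is organized but never executed (``I would reduce\dots'', ``I would then verify\dots''). Since it is a finite, fully specified, mechanical check (and your observation that $B-v$ divides both $P(1)-P(u)$ and $1-u$ keeps it manageable), this is a deferment rather than a flaw, but a complete write-up must actually carry it out, if only by computer algebra. What your approach buys over the implicit quadratic-formula route is rigor without any discussion of square-root branches; what it loses is any explanation of where the parametrization $B(u)$ comes from.
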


\subsection{Fighting fish with marked points}\label{sec:marked}
\begin{figure}
\centering
\includegraphics[width=0.8\textwidth]{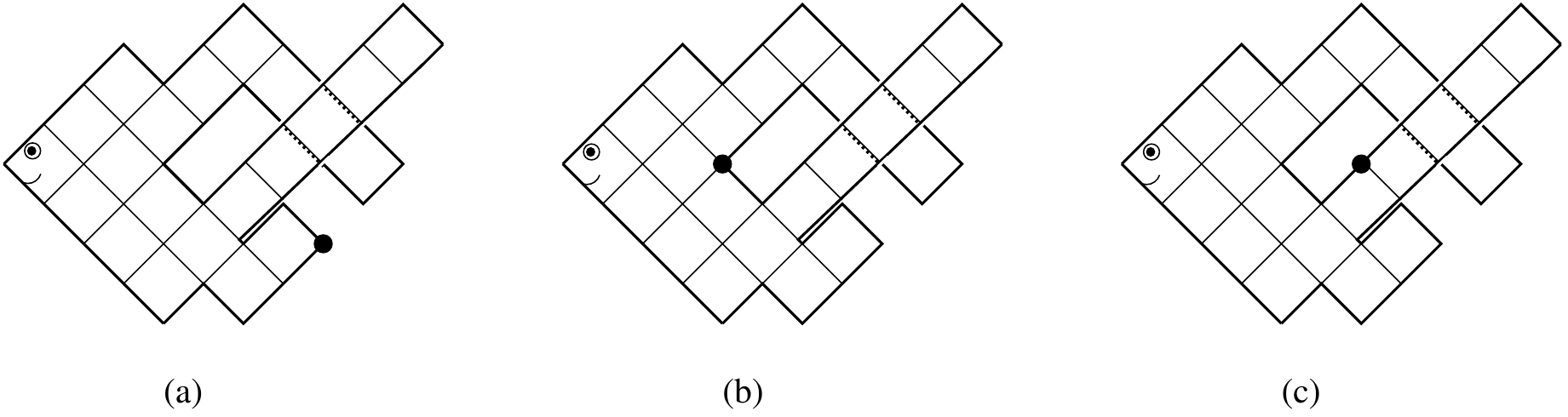}
\caption{Fish with marked points: (a) a tail, (b) a branch point, (c) an upper flat point.}
\label{fig:markedpoints}
\end{figure}

Let $P^<$ denote the generating series of fish with a marked branched
point, $P^>$ the generating series of fish with a marked tail,
$2P^{-}$ the generating series of fish with a marked \emph{flat} point, that
is, a marked point which is neither the head nor a tail nor a branched
point (observe that each fish has the same number of upper and lower
flat points, hence the factor 2). The generating series of fighting
fish with a marked point is then:
\begin{equation}
P(1)+2P^{-}+P^>+P^<=2\frac{t\partial}{\partial t}(tP(1)).
\end{equation}

From the fact that there is always one more tail than branch point we have
\begin{equation}
P(1)+P^<=P^>
\end{equation}
so that we also have 
\begin{equation}
P^-+P^>=P(1)+P^<+P^-=\frac{t\partial}{\partial t}(tP(1)).
\end{equation}
Fighting fish with a marked tail can also be counted thanks to the variable $y$:
\begin{equation}
P^<=\frac{y\partial}{\partial y}P(1), \quad\textrm{ or }\quad 
P^>=\frac{y\partial}{\partial y}(yP(1)).
\end{equation}

Observe that derivating Equation~\eqref{eqn:eq1} with respect to $y$
instead of $u$ yields the same coefficient for the derivative of
$P(u)$, which cancels for $u=U$. This simplification leads to 
the remarkable relations:
\begin{equation}\label{eq:markedPU}
P^<=y\frac{\partial}{\partial y} P(1)=P(U)-P(1),
\qquad\textrm{ and }
P^>=P(U).
\end{equation}
This relation allows to use bivariate Lagrange inversion on the
parametrization $P(U)=B$ in Theorem~\ref{thm:gfP1} to prove
Theorem~\ref{thm:marked}, we omit the details (Appendix~\ref{ap:lagrange}).

Similarly derivating Equation~\eqref{eqn:eq1} with respect to $t$ and taking $u=U$ yields:
\begin{equation}\label{eq:eqU}
U=\frac1{1-V} \quad\textrm{ where }\quad V=ytab\frac{t\partial}{\partial t} P(1)=ytab(P^-+P^<).
\end{equation}

Equations~\eqref{eq:markedPU} and~\eqref{eq:eqU} admit direct combinatorial
interpretations (Appendix~\ref{ap:bijections}).

\section{Fighting fish and left ternary trees: the fin/core relation}\label{sec:ternary}

A \emph{ternary tree} is a finite tree which is either empty or
contains a root and three disjoint ternary trees called the left,
middle and right subtrees of the root. Given a initial root label $j$,
a ternary tree can be naturally embedded in the plane in a
deterministic way: the root has abscissa $j$ and the left
(resp. middle, right) child of a node with abscissa $i\in\mathbb{Z}$
has abscissa $i-1$ (resp, $i$, $i+1$). A \emph{$j$-positive tree} is a
is a ternary tree whose nodes all have non positive abscissa;
$0$-positive trees were first introduced in the literature with the
name \emph{left ternary tree} \cite{DDP,JS} (in order to be coherent
with these works one should orient the abscissa axis toward the left).

It is known that the number of left ternary trees with $i$ nodes at
even position and $j$ nodes at odd position is given by
Formula~\eqref{for:for1} \cite{DDP,JS}. In order to refine this result
we introduce the following new parameters on left ternary trees:
\begin{itemize}
\item Let the \emph{core} of a ternary tree $T$ be the largest
  subtree including the root of $T$ and consisting only of left and
  middle edges. 
\item Let a \emph{right branch} of a ternary tree be a maximal
  sequence of right edges.
\end{itemize}
In order to prove Theorem~\ref{thm:fin-core} we compute the generating
series of $j$-positive trees according to the number of nodes and
nodes in the core.

\subsection{A refined enumeration of $j$-positive trees}
\textbf{In this section we implicitly take $y=a=b=1$ in all generating
  series.}

Let $T$, $B$ and $X$ be the unique formal power series solutions of
\begin{align*}
T=1+tT^3,\quad\textrm{ and }\quad B=tT^2
\quad\textrm{ and }\quad 
X=B(1+X+X^2).
\end{align*}
Observe that $B$ coincide with the series $B(t;1,1,1,1)$ of the
previous sections and that,
\begin{align*}
T=\frac1{1-B}=\frac{1+X+X^2}{1+X^2},\quad\textrm{ and }\quad
T-1=BT=\frac{X}{1+X^2}, \quad\textrm{ and }\quad
B=\frac{X}{1+X+X^2}.
\end{align*}
Building on Di Francesco's educated guess and check approach
\cite{diFrancesco2}, Kuba obtained a formula for $j$-positive trees
reads:
\begin{thm}[\cite{kuba,diFrancesco2}] \label{thm:kuba}
The generating series $T_j=T_j(t;1,1,1,1)$ is given for all $j\geq0$
by the explicit expression:
\begin{equation*}
T_j=T\frac{(1-X^{j+5})(1-X^{j+2})}{(1-X^{j+4})(1-X^{j+3})}.
\end{equation*}
\end{thm}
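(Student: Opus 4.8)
The plan is to prove Theorem~\ref{thm:kuba} by setting up and solving a functional equation for the generating series $T_j$ of $j$-positive ternary trees, following Di Francesco's guess-and-check philosophy. First I would decompose a $j$-positive tree at its root: the root sits at abscissa $-j$ (using the convention where positive means non-positive abscissa), and its three subtrees are respectively a $(j+1)$-positive tree (left child, abscissa $-j-1$), a $j$-positive tree (middle child, same abscissa), and a $(j-1)$-positive tree (right child, abscissa $-j+1$). This yields the recurrence
\begin{equation*}
T_j=1+t\,T_{j+1}T_jT_{j-1}\qquad(j\ge 0),
\end{equation*}
together with the boundary condition $T_{-1}=0$, since a node at abscissa $+1$ is forbidden. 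Here $T_0=T$ is the series of all left ternary trees, which indeed satisfies $T=1+tT^3$ as stated.

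Next I would verify that the proposed closed form solves this recurrence. The strategy is not to derive the formula from scratch but to substitute the ansatz $T_j=T\,(1-X^{j+5})(1-X^{j+2})/\bigl((1-X^{j+4})(1-X^{j+3})\bigr)$ into the recurrence and check it holds identically. Using the stated relations $T=1+tT^3$, $B=tT^2=X/(1+X+X^2)$, and $T=(1+X+X^2)/(1+X^2)$, I would rewrite everything as rational functions of $X$. The recurrence $T_j-1=t\,T_{j+1}T_jT_{j-1}$ becomes, after dividing by $T_j$ and using $t=B/T^2$,
\begin{equation*}
1-\frac{1}{T_j}=B\,\frac{T_{j+1}T_{j-1}}{T}.
\end{equation*}
Substituting the ansatz, the ratios $T_{j\pm1}/T$ telescope into products of factors $(1-X^{j+k})$, and the whole identity should reduce to a polynomial identity in $X$ that can be checked directly — this is the routine but essential verification step.

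The two things still needed are \textbf{uniqueness} and the \textbf{boundary condition}. For uniqueness I would argue that the recurrence $T_j=1+tT_{j+1}T_jT_{j-1}$ together with the initial data determines each $T_j$ as a formal power series in $t$ order by order: extracting the coefficient of $t^n$ expresses it in terms of strictly lower-order coefficients, so the solution is unique once we fix which boundary conditions pin down the doubly-infinite-looking recurrence. The genuinely delicate point is the boundary: the naive recurrence relates $T_j$ to both $T_{j+1}$ and $T_{j-1}$, so it does not by itself determine the family; one must impose the correct condition at $j=-1$ (namely $T_{-1}=0$) and check that the closed form respects it. I expect the \textbf{main obstacle} to be exactly this boundary analysis — verifying that the ansatz gives $T_{-1}=0$ (the factor $(1-X^{j+4})$ in the denominator vanishes at $j=-4$, not $j=-1$, so one must instead check the numerator factor $(1-X^{j+2})$ forces the right degeneration, or reinterpret the recurrence as a one-sided one solvable upward from $j=0$). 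Concretely, setting $j=-1$ should make a numerator factor vanish, yielding $T_{-1}=0$ as required; confirming this cleanly, and confirming that the formula specializes correctly to $T_0=T$ at $j=0$, is where the real care lies. Once these checks pass, uniqueness of the formal power series solution finishes the proof.
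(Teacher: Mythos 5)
Your overall strategy---derive the three-term recurrence $T_j=1+t\,T_{j+1}T_jT_{j-1}$ from the root decomposition, substitute the closed-form ansatz, reduce to a polynomial identity in $X$, and conclude by order-by-order uniqueness of the power series solution---is exactly the guess-and-check route taken in this area: the paper quotes Theorem~\ref{thm:kuba} from Kuba and Di Francesco, but its own proof of the refinement (Theorem~\ref{thm:generalizedKuba}) proceeds precisely this way, including the same uniqueness argument for the system of recurrences. However, your boundary analysis, which you yourself identify as the crux, is wrong on both of its concrete claims. The boundary condition is not $T_{-1}=0$ but $T_{-1}=1$: a tree rooted at the forbidden abscissa must be \emph{empty}, and the empty tree contributes $1$ to the series (just as in $T=1+tT^3$). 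With your condition $T_{-1}=0$, the $j=0$ instance of the recurrence would give $T_0=1+t\,T_1T_0\cdot 0=1$, which is absurd. Consistently, the closed form does \emph{not} vanish at $j=-1$: neither numerator factor $(1-X^{j+5})$, $(1-X^{j+2})$ vanishes there, and a direct computation using $T=(1+X+X^2)/(1+X^2)$ gives $T_{-1}=T\frac{(1-X^4)(1-X)}{(1-X^3)(1-X^2)}=T\cdot\frac{1+X^2}{1+X+X^2}=1$, exactly the correct combinatorial value. The vanishing you were hunting for occurs one step further down: at $j=-2$ the factor $1-X^{j+2}$ becomes $1-X^0=0$, so the formula extends to $T_{-2}=0$; this is precisely the formal convention the paper adopts to run the recurrence for all $j\geq-1$ (in the proof of Theorem~\ref{thm:generalizedKuba}, the case $j=-1$ then yields $T_{-1}(u)=1$).

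A second factual error would contaminate the verification if used: your remark that ``$T_0=T$ is the series of all left ternary trees, which indeed satisfies $T=1+tT^3$'' is false. The equation $T=1+tT^3$ defines the series of \emph{unconstrained} ternary trees; left ternary trees are strictly fewer, and the closed form gives $T_0=T\frac{(1-X^{5})(1-X^{2})}{(1-X^{4})(1-X^{3})}\neq T$ (rather, $T$ is the limit of $T_j$ as $j\to\infty$, since $X$ has positive valuation). Neither $T_0=T$ nor $T_{-1}=0$ may be fed into the uniqueness argument as initial data. Once these two points are repaired---boundary $T_{-1}=1$, or equivalently the recurrence for $j\geq-1$ with the convention $T_{-2}=0$, and no identification of $T_0$ with $T$---your plan goes through and coincides with the paper's: substitute the ansatz, clear denominators using $B=tT^2=X/(1+X+X^2)$, match coefficients in $X$, and invoke uniqueness of the formal power series solution of the system.
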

Define moreover 
\begin{align*}
T(u)&=1+tuT(u)^2T\quad\textrm{ and }\quad
B(u)=tuT(u)^2
\end{align*}
so that 
\begin{align*}
T(1)=T,\quad\textrm{ and }\quad T(u)=1+B(u)T,
 \quad\textrm{ and }\quad B(u)=(T(u)-1)(1-B)
\end{align*}
Observe that this $B(u)$ coincide with the series $B(t;1,1,1,u)$ of
the previous sections, so that the generating series of fighting fish
according to the size and the fin length, given by
Corollary~\ref{cor:Pu}, can be written as (recall that here $y=a=b=1$)
\begin{align}\label{eqn:eqPuTu}
1+P(u)&=1+B(u)-B(u)^2\frac{B}{(1-B)^2}
=T(u)(1+B)-T(u)^2B.
\end{align}

\begin{thm} \label{thm:generalizedKuba}
The generating series $T_j(u)\equiv T_j(t;1,1,1,u)$ is given for $j\geq-1$ by
\begin{align*} 
T_j(u)&=T(u)\frac{H_j(u)}{H_{j-1}(u)}\frac{1-X^{j+2}}{1-X^{j+3}}
\end{align*}
where for all $j\geq-2$, 
\begin{align*}
H_j(u)&=(1-X^{j+1})XT(u)-(1+X)(1-X^{j+2}).
\end{align*}
\end{thm}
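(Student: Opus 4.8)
The plan is to prove Theorem~\ref{thm:generalizedKuba} by establishing a functional equation for the family $\{T_j(u)\}$ and verifying that the proposed closed form satisfies it together with the correct initial conditions. The starting point must be a recursive decomposition of $j$-positive trees that tracks the catalytic fin-length variable $u$. Recall that $T_j(u)$ refines the series $T_j$ of Theorem~\ref{thm:kuba} (which corresponds to $u=1$), and that $T(u)$ and $B(u)$ satisfy $T(u)=1+tuT(u)^2T$ and $B(u)=tuT(u)^2$. First I would derive the recurrence relating $T_j(u)$ to $T_{j-1}(u)$ (and to $T(u)$), coming from the decomposition of a $j$-positive tree at its root: the left child lowers the abscissa bound to $j-1$, the middle child keeps it at $j$, and the right child raises it, so one expects a relation of the Riccati/continued-fraction type in which the right subtree contributes a factor governed by $T$ at the shifted parameter. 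The key algebraic object is the auxiliary sequence $H_j(u)$, which linearizes this nonlinear recurrence.

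The main computation is to show that the ansatz
\begin{equation*}
T_j(u)=T(u)\frac{H_j(u)}{H_{j-1}(u)}\frac{1-X^{j+2}}{1-X^{j+3}}
\end{equation*}
with $H_j(u)=(1-X^{j+1})XT(u)-(1+X)(1-X^{j+2})$ is consistent with the recurrence. The standard device here is that a nonlinear (Riccati) recurrence for $T_j(u)$ becomes a \emph{linear} second-order recurrence once one substitutes $T_j(u)$ in terms of a ratio of the $H_j(u)$; the numerator factor $(1-X^{j+2})/(1-X^{j+3})$ accounts for the extra geometric correction already present in Kuba's formula (Theorem~\ref{thm:kuba}). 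Concretely, I would substitute the ansatz into the recurrence, clear denominators using the relations $T=1/(1-B)=(1+X+X^2)/(1+X^2)$, $B=X/(1+X+X^2)$, and $T-1=X/(1+X^2)$, and check that the resulting identity in $X$, $T(u)$ and the $H_j$'s reduces — after using the defining equation $X=B(1+X+X^2)$ — to a telescoping linear relation among $H_{j+1}(u)$, $H_j(u)$ and $H_{j-1}(u)$. Because $H_j(u)$ is affine in $X^{j}$ and in $T(u)$, this verification is a finite algebraic identity rather than an induction that accumulates error.

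To pin down the solution uniquely I would then check the base cases. The natural anchors are $j=-1$ and the limiting behavior as $j\to\infty$: as $j$ grows the bound becomes inactive and $T_j(u)\to T(u)$, which matches since $H_j(u)/H_{j-1}(u)\to 1$ and $(1-X^{j+2})/(1-X^{j+3})\to 1$ (here $X$ is a power series in $t$ with no constant term, so $X^{j}\to 0$ coefficientwise). The $j=-1$ case should reduce to a known small series, and it must be compatible with Equation~\eqref{eqn:eqPuTu}, which expresses $1+P(u)$ in terms of $T(u)$ and $B$; this is precisely the bridge that ties the refined tree series back to fighting fish and hence, at $u=1$, recovers Theorem~\ref{thm:kuba} and ultimately Theorem~\ref{thm:fin-core}. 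The hard part will be guessing and then justifying the exact form of $H_j(u)$: the linearizing substitution is not mechanical, and one typically arrives at it by Di~Francesco's educated guess-and-check method, extracting the pattern from the first several $T_j(u)$ computed directly from the recurrence and only afterward proving that the closed form propagates. Once $H_j(u)$ is correctly identified, the remaining steps are routine algebraic verification.
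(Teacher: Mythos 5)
Your overall strategy---verify that the stated closed form satisfies the defining recursive system for the $T_j(u)$, check the base case, and conclude by uniqueness of the power series solution---is exactly the paper's. But there is a genuine gap at the heart of the plan: you never pin down the refined functional equation, and the shape you sketch is wrong in the one respect that matters. The correct system (Equation~\eqref{eq:rectrees} in the paper) is
\begin{equation*}
T_j(u)=1+tu\,T_{j+1}(u)\,T_j(u)\,T_{j-1},\qquad j\geq-1,\quad T_{-2}=0,
\end{equation*}
in which exactly the two subtree factors that extend the core carry the catalytic variable $u$, while the third factor is the \emph{unrefined} series $T_{j-1}$ at $u=1$, for which Kuba's closed form (Theorem~\ref{thm:kuba}) is substituted as a known input---so Theorem~\ref{thm:kuba} is an ingredient of the verification, not merely a consistency check at $u=1$ as you treat it. Your sketch instead relates $T_j(u)$ to $T_{j-1}(u)$ ``and to $T(u)$'': this conflates the constrained shifted series with the unconstrained $T(u)$ (which never appears in the recurrence itself, only inside $H_j(u)$) and leaves ambiguous which factors are refined. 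The ambiguity is fatal: if all three factors carried $u$, the unique solution would simply be Kuba's $T_j$ with $t\mapsto tu$, not the stated formula. Orientation matters too: with the embedding as literally stated in Section~\ref{sec:ternary} (left child at abscissa $i-1$, as you assume) and the core defined by left and middle edges, one gets the mirrored system $T_j(u)=1+tu\,T_{j-1}(u)T_j(u)T_{j+1}$, whose solution differs from the theorem's formula already at order $t^2$ (one finds $u^2+u$ instead of $2u^2$ for $T_0$); the axis must be oriented as in \cite{DDP,JS} for the ansatz to verify. So ``routine algebraic verification'' would in fact fail with the recurrence as you describe it.

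Once the correct equation is in hand, the rest of your plan does match the paper: the case $j=-1$ gives $T_{-1}(u)=1$ directly from $H_{-1}(u)=-(1+X)(1-X)$ and $H_{-2}(u)=(X-1)T(u)$; and for $j\geq0$ one uses $tuT(u)^2T=T(u)-1$ to reduce the check to the single identity
\begin{equation*}
H_{j-1}(u)(1-X^{j+3})+(T(u)-1)H_{j+1}(u)(1-X^{j+1})=T(u)H_j(u)(1-X^{j+2}),
\end{equation*}
verified by comparing coefficients of $T(u)^2$, $T(u)$ and $T(u)^0$ as polynomials in $X$---essentially your ``telescoping linear relation among $H_{j+1},H_j,H_{j-1}$.'' Two smaller points: your concern about having to guess $H_j(u)$ is moot, since the statement to be proved supplies it; and the $j\to\infty$ limit you invoke is unnecessary, because the system together with $T_{-2}=0$ already determines the $T_j(u)$ uniquely as formal power series.
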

\begin{cor}\label{cor:basic}
The number of left ternary trees with $n$ vertices, $k$ of which belong to the core, is equal to the number of fighting fish of size $n$ with fin length $k$.
\end{cor}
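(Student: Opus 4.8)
The plan is to deduce Corollary~\ref{cor:basic} by comparing two bivariate generating series in the variables $t$ (marking nodes, resp.\ size) and $u$ (marking core size, resp.\ fin length), extracting the coefficient of $u$-degree from each side. On the fighting-fish side, Theorem~\ref{thm:fin-core} already identifies fin length with the parameter tracked by $u$ in $P(u)$, so what I would really do is reconcile the tree series $T_j(u)$ of Theorem~\ref{thm:generalizedKuba} with the fish series $1+P(u)$ given in \eqref{eqn:eqPuTu}. First I would fix the combinatorial meaning of the specialization: since $T_j(u)$ counts $j$-positive trees with $u$ marking nodes in the core, the series counting \emph{left} ternary trees (the $j=0$ case) refined by core size is $T_0(u)$, and its $[u^k]$ coefficient should match the number of left ternary trees with $k$ core nodes. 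The target is therefore to show that this coincides, coefficient by coefficient in $t$ and $u$, with $1+P(u)$ from Corollary~\ref{cor:Pu} specialized at $y=a=b=1$.

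Next I would carry out the identification algebraically. Setting $j=0$ in the formula of Theorem~\ref{thm:generalizedKuba} gives
\begin{equation*}
T_0(u)=T(u)\,\frac{H_0(u)}{H_{-1}(u)}\,\frac{1-X^{2}}{1-X^{3}},
\end{equation*}
and I would substitute the explicit $H_0(u)=(1-X)XT(u)-(1+X)(1-X^2)$ and $H_{-1}(u)=XT(u)-(1+X)(1-X)$. The goal is to simplify this rational expression in $T(u)$ and $X$ until it matches $T(u)(1+B)-T(u)^2B$, the right-hand side of \eqref{eqn:eqPuTu}. The key leverage here is the dictionary already laid out in the excerpt, namely $B=X/(1+X+X^2)$, $1+B=(1+X+X^2)/(1+X^2)$ \emph{(obtained from $T=1/(1-B)$)}, and the relation $B(u)=(T(u)-1)(1-B)$ connecting $T(u)$ to $B(u)$; rewriting everything in terms of $X$ and $T(u)$ should collapse the two sides to the same rational function.

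The main obstacle I anticipate is exactly this algebraic collapse: the $j=0$ instance of $T_j(u)$ is a ratio of two linear polynomials in $T(u)$ with $X$-dependent coefficients, whereas $1+P(u)$ is an \emph{honest quadratic} in $T(u)$, so the claimed equality is not manifest and requires that the denominator $H_{-1}(u)$ divide the numerator after clearing. I would verify this by expressing both $H_0(u)$ and $H_{-1}(u)$ through the substitution $XT(u)=X+B(u)(1+X+X^2)/\dots$ (using $T(u)=1+B(u)T$ and $T=(1+X+X^2)/(1+X^2)$), which should reveal a common factor and reduce $T_0(u)$ to a polynomial rather than a rational function. Once the two explicit expressions agree as formal power series in $t$ and $u$, the corollary follows immediately: equating $[t^n u^k]$ on both sides shows that the number of left ternary trees with $n$ vertices and $k$ core vertices equals the number of fighting fish of size $n$ with fin length $k$. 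As a sanity check I would confirm the boundary cases $u\to 0$ (trees/fish with trivial core/fin) and $u=1$, where $T_0(1)=T_0$ must recover Kuba's formula from Theorem~\ref{thm:kuba} and $1+P(1)$ must recover the univariate count of Theorem~\ref{thm:fighting-fish}.
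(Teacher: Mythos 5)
Your overall route is exactly the paper's: specialize Theorem~\ref{thm:generalizedKuba} at $j=0$ and simplify $T_0(u)$ until it matches $1+P(u)$ as given by Equation~\eqref{eqn:eqPuTu}. But your execution contains a concrete error that manufactures a phantom difficulty and would derail the computation. From the definition $H_j(u)=(1-X^{j+1})XT(u)-(1+X)(1-X^{j+2})$, at $j=-1$ the coefficient $1-X^{j+1}=1-X^{0}$ vanishes, so $H_{-1}(u)=-(1+X)(1-X)$ depends on $X$ alone and has no $T(u)$ term; your claimed $H_{-1}(u)=XT(u)-(1+X)(1-X)$ is wrong. Consequently your announced ``main obstacle'' --- that $T_0(u)$ is a ratio of two polynomials linear in $T(u)$, so that $H_{-1}(u)$ must miraculously divide the numerator --- does not exist: since the denominator is the scalar $-(1-X^2)$, the expression $T_0(u)=T(u)\frac{H_0(u)}{H_{-1}(u)}\frac{1-X^2}{1-X^3}=-\frac{T(u)H_0(u)}{1-X^3}$ is manifestly a quadratic polynomial in $T(u)$, and the verification is a two-line simplification. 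Worse, with your $H_{-1}(u)$ the divisibility you hope to uncover actually fails: $H_0(u)=(1-X)\bigl(XT(u)-(1+X)^2\bigr)$ is not proportional to $XT(u)-(1-X^2)$, so the ratio is a genuinely non-polynomial rational function of $T(u)$ and cannot equal the quadratic $T(u)(1+B)-T(u)^2B$; carried out as written, your computation would end in a contradiction rather than in the corollary.

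A second slip in your dictionary would also block the final matching: $(1+X+X^2)/(1+X^2)$ equals $T=1/(1-B)$, not $1+B$; the correct values are $B=X/(1+X+X^2)$ and hence $1+B=(1+X)^2/(1+X+X^2)$. With both points corrected, the paper's computation goes through directly:
\begin{equation*}
T_0(u)=-\frac{X(1-X)}{1-X^3}\,T(u)^2+\frac{(1+X)(1-X^2)}{1-X^3}\,T(u)
=-B\,T(u)^2+(1+B)\,T(u),
\end{equation*}
which is precisely the right-hand side of Equation~\eqref{eqn:eqPuTu}. Your surrounding logic --- that Theorem~\ref{thm:generalizedKuba} supplies the core-refined series for left ternary trees ($j=0$), that Corollary~\ref{cor:Pu} at $y=a=b=1$ supplies the fin-refined fish series, and that equating $[t^nu^k]$ coefficients yields the corollary --- is the right frame and matches the paper; only the algebraic middle needs the fixes above. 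The sanity checks you propose at $u=1$ (Theorem~\ref{thm:kuba} and Theorem~\ref{thm:fighting-fish}) are reasonable but unnecessary once the identity is verified exactly.
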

\begin{proof}[Proof of Corollary~\ref{cor:basic}]
This is a simple computation:
\begin{align*}
T_0(u)&=T(u)\frac{H_0(u)}{H_{-1}(u)}\frac{1-X^{2}}{1-X^{3}}.
\end{align*}
By definition $H_{-1}=-(1+X)(1-X)$ and $H_{-2}(u)=(X-1)T(u)$,
so that
\begin{align*}
T_0(u)&=T(u)\frac{(1-X)XT(u)-(1+X)(1-X^2)}{-(1-X^2)}\frac{1-X^{2}}{1-X^{3}}
=-T(u)^2B+T(u)(1+B).
\end{align*}
which coincide with Equation~\eqref{eqn:eqPuTu}.
\end{proof}

\begin{proof}[Proof of Theorem~\ref{thm:generalizedKuba}]
In order to prove the theorem it is sufficient to show that the series
given by the explicit expression satisfies for all $j\geq -1$ the
equation:
\begin{equation}\label{eq:rectrees}
T_{j}(u)=1+tuT_{j+1}(u)T_{j}(u)T_{j-1}
\end{equation}
where $T_j$ is given by Theorem~\ref{thm:kuba}, with the convention
that $T_{-2}=0$: indeed the system of Equations~\eqref{eq:rectrees}
clearly admits the generating series of $j$-positive ternary trees as
unique power series solutions.
The case $j=-1$ is immediate:
\begin{equation*}
T_{-1}(u)=T(u)\frac{H_{-1}(u)}{H_{-2}(u)}\frac{1-X}{1-X^2}=1.
\end{equation*}
Let now $j\geq0$, then the right hand side of Equation~\eqref{eq:rectrees} reads
\begin{align*}
1+&tuT(u)^2T\cdot\frac{H_{j+1}(u)}{H_{j-1}(u)}\cdot
\frac{1-X^{j+4}}{1-X^{j+3}}\cdot\frac{1-X^{j+1}}{1-X^{j+4}}\\&
=\frac{H_{j-1}(u)(1-X^{j+3})+(T(u)-1)H_{j+1}(u)(1-X^{j+1})}{H_{j-1}(u)(1-X^{j+3})}
\end{align*}
and we want to show that this is equal to 
\begin{align*}
\frac{T(u)H_{j}(u)(1-X^{j+2})}{H_{j-1}(u)(1-X^{j+3})}.
\end{align*}
Now
\begin{align*}
H_{j-1}(u)(1-X^{j+3})
&=(1-X^{j+3})(1-X^{j})XT(u)-(1+X)(1-X^{j+1})(1-X^{j+3}),\\
-H_{j+1}(u)(1-X^{j+1})
&=-(1-X^{j+1})(1-X^{j+2})XT(u)+(1+X)(1-X^{j+3})(1-X^{j+1}),\\
T(u)H_{j+1}(u)(1-X^{j+1})&=(1-X^{j+1})(1-X^{j+2})XT(u)^2-(1+X)(1-X^{j+3})(1-X^{j+1})T(u),
\end{align*}
while
\begin{align*}
T(u)H_j(u)(1-X^{j+2})
&=(1-X^{j+2})(1-X^{j+1})XT(u)^2-(1+X)(1-X^{j+2})^2T(u).
\end{align*}
The coefficients of $T(u)^2$ and $T(u)^0$ are clearly matching. Upon
expanding all contributions to the coefficient of $T(u)$ in
power of $X$, the various terms are directly seen to match as well.
\end{proof}

\subsection{A refined conjecture}
In view of Theorem~\ref{thm:count} and Theorem~\ref{thm:fin-core}, it
is natural to look for a common generalization. Indeed one can even
take the number of tails into account:
\begin{con}\label{fin-core}
The number of fighting fish with size $n$, fin length $k$, having $h$
tails, with $i$ left lower free edges and $j$ right lower free edges
is equal to the number of left ternary trees with $n$ nodes, core size
$k$, having $h$ right branches, with $i+1$ non root nodes with even
abscissa and $j$ nodes with odd abscissa.
\end{con}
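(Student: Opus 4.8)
The plan is to prove Conjecture~\ref{fin-core} by the same generating-series route that established its unrefined specialization in Corollary~\ref{cor:basic}, namely by computing independently a fully refined version of the left-ternary-tree series and matching it, at $j=0$, against the fighting-fish parametrization of Corollary~\ref{cor:Pu}. On the fish side nothing new is required: Corollary~\ref{cor:Pu} already provides $P(u)\equiv P(t,y,a,b;u)$ with $t,y,a,b,u$ marking size, tails, right size, left size and fin length. The work is therefore entirely on the tree side. I would introduce a series $T_j(u)\equiv T_j(t,y,a,b;u)$ counting $j$-positive trees with $t$ per node, $u$ per core node, $y$ per right branch, and $a$ (resp.\ $b$) per node of odd (resp.\ even) abscissa, the root excepted, and then reprove Theorem~\ref{thm:generalizedKuba} in this enriched setting.

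First I would write down the recursive system satisfied by these refined series. Decomposing a $j$-positive tree at its root, the left and middle subtrees lie in the core (and are tracked by $u$) while the right subtree leaves the core; the abscissa of the root contributes a weight $a$ or $b$ according to the parity of $j$; and passing from the root to its right child opens a fresh right branch. To record maximal runs of right edges correctly I would split the family into two, according to whether the incoming edge of the subtree is a right edge, obtaining a coupled system generalizing Equation~\eqref{eq:rectrees} in which the coefficient in front of the cubic term alternates with the parity of $j$ and a factor $y$ is inserted exactly when a right branch is created. As in the unrefined case this system admits the refined $j$-positive tree series as its unique power-series solution, so it suffices to exhibit an explicit solution.

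Next I would make an educated guess for the closed form, following Di Francesco's method as in Theorem~\ref{thm:generalizedKuba}. I expect the single auxiliary series $X$ to be replaced by a deformation $X=X(t,y,a,b)$ (together with deformed analogues of $T$ and $B$), and the numerator/denominator factors $H_j$ to acquire a period-two dependence on $j$ reflecting the alternation of the coefficients $a$ and $b$. I would determine the correct deformation by imposing that the guessed ratio telescopes through the recursion, verify the guess by direct substitution exactly as in the present proof of Theorem~\ref{thm:generalizedKuba} (matching the coefficients of the various powers of $T(u)$), and finally specialize to $j=0$ and check that the outcome reproduces the right-hand side of Corollary~\ref{cor:Pu}, which would complete the proof.

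The main obstacle will be the loss of the index-homogeneity that makes the unrefined computation work. With $a\neq b$ the recursion is only invariant under $j\mapsto j+2$, so the clean ratio-of-$H_j$ ansatz built on a single series $X$ can no longer hold verbatim: the correct ansatz must itself be period-two, and finding the two-parameter deformation of $X$ (and of the factors $1-X^{j+c}$) that keeps the telescoping intact is the crux of the argument. A secondary difficulty is that the variable $y$ forces the auxiliary right-branch family, enlarging the system and complicating the guess-and-check bookkeeping; and even once the tree series is known in closed form, the final identification with Corollary~\ref{cor:Pu} will require a nontrivial check that the deformed $X$, $T$ and $B$ specialize compatibly with the fish parametrization. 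Should this analytic route prove intractable, the alternative would be to promote the partial bijections already available for fish with few tails or few non-fin edges to a full statistic-preserving bijection; but the very fact that those bijections have so far resisted extension suggests that the refined generating-series computation is the more promising line of attack.
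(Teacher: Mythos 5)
There is a fundamental mismatch here: the statement you are addressing is a \emph{conjecture}, and the paper offers no proof of it. The authors state explicitly that they were unable to prove it, except bijectively in two special cases (fish with at most two tails, i.e.\ trees with at most one right branch; and fish with $h$ tails and at most $h+2$ lower edges outside the fin). Your text, by its own admission, is likewise not a proof but a research plan: the entire mathematical content --- the explicit period-two deformation of $X$, $T$, $B$ and of the factors $H_j(u)$, the verification that the guessed ratio satisfies the coupled recursion, and the final identification with Corollary~\ref{cor:Pu} --- is deferred, and you yourself name the deformation ``the crux of the argument'' without supplying it. A proposal whose key lemma is ``find the ansatz that makes everything telescope'' has not closed any gap; until the deformed closed form is exhibited and checked, nothing is proved.

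That said, your strategy is a sensible one and is the natural continuation of the paper's own method: Corollary~\ref{cor:basic} (the $y=a=b=1$ case, i.e.\ Theorem~\ref{thm:fin-core}) is proved exactly by computing $T_0(u)$ via a guess-and-check closed form for $T_j(u)$ and matching it against the fish series \eqref{eqn:eqPuTu}, so a refined $T_j(t,y,a,b;u)$ matched against the full Corollary~\ref{cor:Pu} would indeed prove the conjecture (equality of generating series suffices; no bijection is needed). You also correctly identify the two real obstructions: with $a\neq b$ the recursion $T_j(u)=1+tuT_{j+1}(u)T_j(u)T_{j-1}$ acquires coefficients alternating with the parity of $j$, destroying the homogeneity that lets a single series $X$ and the uniform factors $1-X^{j+c}$ work; and tracking maximal right branches with $y$ forces a coupled system (subtrees entered via a right edge versus not), since a right child of a right child continues, rather than opens, a right branch. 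But be aware that there is no guarantee that a product/ratio closed form of Kuba--Di Francesco type survives this deformation --- even at $u=1$ no such $(a,b)$-refined formula for $T_j$ is known, and the known $(i,j)$-counts of left ternary trees \cite{DDP,JS} come from different methods. Also check your bookkeeping of the statistics: the paper's series carries $a^{\mathrm{rsize}-1}b^{\mathrm{lsize}-1}$ (exponents decreased by one), and the conjecture's correspondence ($i+1$ non-root even-abscissa nodes, $j$ odd-abscissa nodes) involves shifts that your ``$a$ per odd node, $b$ per even node, root excepted'' convention must reproduce exactly at $j=0$, where the root abscissa parity interacts with the alternation in $j$.
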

This conjecture naturally calls for a bijective proof, however we have
been unable to provide such a proof, except in two specific cases:
\begin{itemize}
\item The case of left ternary trees with at most one right branch, which are in bijections with fighting fish with at most two tails for all values of $n$, $k$, $i$ and $j$.
\item The case of left ternary trees with $h$ right branches and at
  most $h+2$ vertices, which are in bijection with fighting fish with
  $h$ tails and $h+2$ lower edges that are not in the fin.
\end{itemize}

\acknowledgements{The first and last author are grateful to the
  university of Siena where part of this work was done.}

\printbibliography

\newpage
\appendix

\begin{figure}
\centering
\includegraphics[width=0.4\textwidth]{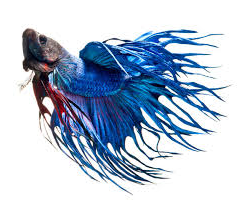}
\caption{A real fighting fish.}
\label{fig:ff}
\end{figure}

\section{Proof of Theorem~\ref{thm:wasp-waist} and Corollary~\ref{cor:gf}}\label{ap:deco}
\begin{proof}[Proof of the theorem]
The operations described in Theorem~\ref{thm:wasp-waist} produce valid
fighting fish: indeed given incremental growths of $P_1$ and $P_2$ we
obtain a valid incremental growth of $P$ upon starting from the new
head, growing the head of $P_1$ interleaving the next steps of the
growth of $P_1$ with insertions of the new cells: each new cell is to
be inserted just before the fin cell it will be attached to; when this
is done, the head of the fish $P_2$ can be attached on $P_1$ and the
rest of $P_2$ growth from there.

It thus remains  to show that every fighting fish of size greater
that $2$ can be uniquely obtained by applying one of the operations
$(B)$ or $(C)$ to fish of smaller size.

In order to prove the result let us describe how to decompose a fish
$P$ which is not reduced to a cell. In order to do this we need two
further definition: First let us call \emph{cut edge} of $P$ any
common side $e$ of two cells of $P$ such that cutting $P$ along $e$
yields two connected components. Second let the set of \emph{fin
  cells} of $P$ be the set of cells incident to a left edge of the
fin: the head of $P$ is always a fin cell and the other fin cells have
non-free left upper sides (since their left lower sides are free and
they must be attached by a left side).

Now the decomposition is as follows: 
\begin{itemize}
\item First mark the head of $P$ as \emph{removable} and consider the
  other fin cells iteratively from left to right: mark them as
  \emph{removable} as long as their left upper side is not a cut edge
  of $P$. Let $R(P)$ be the set of removable cells of $P$.
\item If all fin cells are marked as removable then removing these
  cells yields a fighting fish $P_1=P\setminus R(P)$, and applying the
  construction of Case (B2) to $P_1$ gives $P$ back. Conversely any
  fish produced as in Case (B2) has all its fin cells removable.
\item Otherwise let $c$ be the first fin cell which is not
  removable. Upon cutting the left upper side $e$ of $c$, two
  components are obtained: let $P_2$ be the component containing $c$
  and let $\bar P_1$ be the other component, which contains by
  construction all the removable cells of $P$. Using the incremental
  construction of fighting fish one easily check that $P_1=\bar
  P_1\setminus R(P)$ is a (possibly empty) fighting fish, and $P_2$ is
  a non-empty fighting fish.
\begin{itemize}
\item If $P_1$ is empty then applying the construction of Case (B2) to
  $P_1$ yields $P$ back, and conversely all fish produced as in Case
  (B2) have a decomposition with $P_1$ empty.
\item Otherwise the edge $e$ corresponds to a right lower side $\bar
  e_1$ on the fin of $\bar P_1$, or equivalently to an edge $e_1$ of
  the fin of $P_1$: if $\bar e_1$ is a side of a removable cell of $P$
  then $e_1$ is the right upper edge of this cell, which is a right
  lower edge on the fin of $P_1$ (this corresponds to Case (C3));
  otherwise $e_1=e$ is a right lower edge on the fin of $P_1$ (this
  corresponds to Case (C1) or (C2) depending whether $e_1$ is the
  rightmost edge on the fin of $P_1$ or not.
\end{itemize}
\end{itemize}
\end{proof}

\begin{proof}[Proof of the corollary] The wasp-waist decomposition of Theorem~\ref{thm:wasp-waist}
is readily translated into the following functional equation:
\begin{align*}
P(u) &= tu + tub P(u) + tua P(u) + tuab P(u)^2 + \\
&  ytabP(u) \sum_{P_1} t^{\mathrm{size} (P_1)-1} y^{\mathrm{tails} (P_1)-1} a^{\mathrm{rsize} (P_1)-1} 
b^{\mathrm{lsize} (P_1)-1} \left( u+\ldots +u^{\mathrm{fin} (P_1)-1} \right )\\
&= tu(1+aP(u))(1+bP(u))+ytabuP(u)\frac{P(1)-P(u)}{1-u},
\end{align*}
where the only difficult point is to observe that given a pair
$(P_1,P_2)$ of non empty fighting fish with $\mathrm{fin}(P_1)=k+1$,
Cases~(C2) and~(C3) together produce $k$ fighting fish with fin size
$2$, $3$, \dots, $k+1$ respectively.
\end{proof}
\section{Proof of Theorem~\ref{thm:gfP1}}\label{ap:gf}
Let us resume the proof from the system formed by Equations~\eqref{eqn:eq1U}, \eqref{eqn:eq2U} and~\eqref{eqn:eq3U}.

Comparing Equation~\eqref{eqn:eq1U} and Equation~\eqref{eqn:eq3U}
multiplied by $U$ we immediately deduce the simpler relation
\begin{equation}\label{eqn:eq4U}
P(U)=tU^2(1+aP(U))(1+bP(U)).
\end{equation}
Now comparing Equation~\eqref{eqn:eq1U} and Equation~\eqref{eqn:eq2U}
multiplied by $P(U)$ yields, up to canceling a factor $tU$,
\begin{equation*}
(U-1)(1+(a+b)P(U)+abP(U)^2)=(U-1)P(U)(a+b+2abP(U))+yabP(U)^2,
\end{equation*}
that is
\begin{equation}\label{eqn:eq5U}
U=1+y\frac{abP(U)^2}{1-abP(U)^2}
\end{equation}
In view of Equations~\eqref{eqn:eq4U} and~\eqref{eqn:eq5U}, $P(U)$ is the unique formal power series solution of the equation:
\begin{equation}\label{eqn:eq6U_2}
P(U)=t\left(1+y\frac{abP(U)^2}{1-abP(U)^2}\right)^2(1+aP(U))(1+bP(U)).
\end{equation}
In other terms $P(U)=B$ as defined in Theorem~\ref{thm:gfP1}.  Now using
Equation~\eqref{eqn:eq5U} to eliminate $U$ in
Equation~\eqref{eqn:eq1U}, and canceling a factor $yabP(U)$ we have:
\begin{equation*}
\frac{P(U)^2}{1-abP(U)^2}
=tU\frac{P(U)}{1-abP(U)^2}
(1+aP(U))(1+bP(U))+tU(P(U)-P(1)).
\end{equation*}
Using Equation~\eqref{eqn:eq4U} to expand a factor $P(U)$ in the left
hand side, and canceling a factor $tU$, this equation can be
rewritten as:
\begin{equation}\label{eqn:eq}
\frac{P(U)U(1+aP(U))(1+bP(U))}{1-abP(U)^2}
=\frac{P(U)}{1-abP(U)^2}
(1+aP(U))(1+bP(U))+(P(U)-P(1)).
\end{equation}
In other words:
\begin{equation*}
P(U)-P(1)=(U-1)
\frac{P(U)(1+aP(U))(1+bP(U))}{1-abP(U)^2}
\end{equation*}
and using again Equation~\eqref{eqn:eq4U},
\begin{equation}\label{eqn:eq7U}
P(U)-P(1)=
\frac{yabP(U)^3(1+aP(U))(1+bP(U))}{(1-abP(U)^2)^2}.
\end{equation}
Finally
\begin{equation}\label{eqn:eq8U_2}
P(1)=P(U)-
\frac{yabP(U)^3(1+aP(U))(1+bP(U))}{(1-abP(U)^2)^2},
\end{equation}
which concludes the proof of the theorem using $P(U)=B$.

\section{Proof of the bivariate formulas}\label{ap:lagrange}
\textbf{In this proof we implicitly set $y=1$ in all series.}
Theorems~\ref{thm:count} and~\ref{thm:marked} can be derived by
bivariate Lagrange inversion on the expression of $P(1)$ in terms of
the series
\begin{align*}
\bar R=\frac{aB(1+bB)}{1-abB^2}\qquad\textrm{and}\qquad
\bar S=\frac{bB(1+aB)}{1-abB^2}.
\end{align*}
Indeed 
\begin{equation}\label{eq:B-RS}
B=t\frac{(1+aB)(1+bB)}{(1-abS^2)^2}=t(1+\bar R)(1+\bar S),
\end{equation}
so that $\bar R$ and $\bar S$ satisfy
\begin{align}\label{eq:bivarlarg}
\begin{cases}
\bar R&=ta(1+\bar R)(1+\bar S)^2\\
\bar S&=tb(1+\bar R)^2(1+\bar S).
\end{cases}
\end{align}
Indeed Equation~\eqref{eqn:eq8U} then rewrites as
\begin{align}\label{eq:P1-RS}
P(1)&=B-abB^3\frac{(1+aB)(1+bB)}{(1-abB^2)^2}
=t(1+\bar R)(1+\bar S)(1-\bar R\bar S).
\end{align}
Given a system $\{A_1=a_1\Phi_1(A_1,A_2),A_2=a_2\Phi_2(A_1,A_2)\}$ the bivariate Lagrange inversion theorem states that for any function $F(x_1,x_2)$, 
\begin{align*}
[a_1^{n_1}a_2^{n_2}]F(A_1,A_2)=\frac1{n_1n_2}
[x_1^{n_1-1}x_2^{n_2-1}]&\left(
\frac{\partial^2 F(x_1,x_2)}{\partial x_1\partial x_2}\Phi_1(x_1,x_2)^{n_1}
\Phi_2(x_1,x_2)^{n_2}\right.\\
&+\frac{\partial F(x_1,x_2)}{\partial x_1}\frac{\partial \Phi_1(x_1,x_2)^{n_1}}{\partial x_2}\Phi_2(x_1,x_2)^{n_2}\\
&\left.+\frac{\partial F(x_1,x_2)}{\partial x_2}\frac{\partial \Phi_2(x_1,x_2)^{n_2}}{\partial x_1}\Phi_1(x_1,x_2)^{n_1}\right)
\end{align*}
In other words, \begin{equation}\nonumber
[a_1^{n_1}a_2^{n_2}]F(A_1,A_2)=\frac1{n_1n_2}
[x_1^{n_1-1}x_2^{n_2-1}]\Phi_1(x_1,x_2)^{n_1}
\Phi_2(x_1,x_2)^{n_2} H,\mbox{ where}\end{equation} \begin{equation}\nonumber
H=\frac{\partial^2 F(x_1,x_2)}{\partial x_1\partial x_2}
+{n_1}\frac{\partial F(x_1,x_2)}{\partial x_1}\frac{\partial \Phi_1(x_1,x_2)}{\partial x_2}\frac{1}{\Phi_1(x_1,x_2)}+{n_2}\frac{\partial F(x_1,x_2)}{\partial x_2}\frac{\partial \Phi_2(x_1,x_2)}{\partial x_1}\frac{1}{\Phi_2(x_1,x_2)}.\end{equation}

Setting $t=1$ and applying the bivariate Lagrange inversion formula to the function
$B(\bar R,\bar S)$ in Equation~\eqref{eq:B-RS}, where $\bar R=a\Phi_1(\bar R,\bar
S)$ and $\bar S=b\Phi_2(\bar R,\bar S)$ as defined in system~\eqref{eq:bivarlarg}, yields
\begin{align*}
[a^{i-1}b^{j-1}]B&=\frac{1}{(i-1)(j-1)}
[\bar R^{i-2}\bar S^{j-2}]\left((1+\bar R)^{i+2j-3}(1+\bar
S)^{2i+j-3}(2i+2j-3)\right)\\
&=\frac{(2i+2j-3)}{(i-1)(j-1)}\binom{2j+i-3}{i-2}\binom{2i+j-3}{j-2}\\
&=\frac{(2i+2j-3)}{(2i-1)(2j-1)}\binom{2j+i-3}{i-1}\binom{2i+j-3}{j-1}.
\end{align*}
This proves Theorem~\ref{thm:marked}.
Now, apply the bivariate Lagrange inversion formula to the function $P(1)$ in
Equation~\eqref{eq:P1-RS}: 
it holds
\begin{align*}
[a^{i-1}b^{j-1}]P(1)=\frac{1}{(i-1)(j-1)}
[\bar R^{i-2}\bar S^{j-2}]\left((1+\bar R)^{i+2j-3}(1+\bar
S)^{2i+j-3}\right.&(-4+4\bar R\bar S\\
&+2i(1-2\bar R\bar S-\bar S)\\
&\left.+2j(1-2\bar R\bar S-\bar R))\right).
\end{align*}
By extracting coefficients of $\bar R^{i-2}\bar S^{j-2}$ yields
\begin{align*}
[a^{i-1}b^{j-1}]P(1)=\frac{1}{(i-1)(j-1)}&\left(2(i+j-2)\binom{2j+i-3}{i-2}\binom{2i+j-3}{j-2}\right.\\
&-4(i+j-1)\binom{2j+i-3}{i-3}\binom{2i+j-3}{j-3}\\
&\left.-2j\binom{2j+i-3}{i-3}\binom{2i+j-3}{j-2}-2i\binom{2j+i-3}{i-2}\binom{2j+i-3}{j-3}\right).
\end{align*}
Manipulating and summing all the binomial coefficients it results
\begin{equation}\nonumber
[a^{i-1}b^{j-1}]P(1)=\frac1{ij}\binom{2j+i-2}{i-1}\binom{2i+j-2}{j-1}.
\end{equation}

\section{Bijective interpretations}\label{ap:bijections}
\subsection{A bijective proof of the relation $P^>=P(U)$}
\begin{pro}
There is a bijection between 
\begin{itemize}
\item fighting fish with a marked tail having $i+1$ left lower free edges
  and $j+1$ right lower free edges,
\item and pairs $(P,S)$ where $P$ is a fighting fish with fin size
  $k+1$ and $S$ is a $k$-uple $(U_1,\ldots,U_k)$ of sequences
  $U_i=(V_{i,1},\ldots,V_{i,j_i})$ of fish that are marked on an upper
  flat point or a branch point, such that the total number of left
  lower free edges and right lower free edges are respectively $i+1$ and $j+1$.
\end{itemize}
\end{pro}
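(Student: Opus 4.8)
The plan is to deduce the proposition from the algebraic identity $P^>=P(U)$ of~\eqref{eq:markedPU} by exhibiting a weight-preserving bijection that realizes it. First I would expand the right-hand side combinatorially. Writing $w(P)=t^{\mathrm{size}(P)-1}y^{\mathrm{tails}(P)-1}a^{\mathrm{rsize}(P)-1}b^{\mathrm{lsize}(P)-1}$, we have $P(U)=\sum_{P}w(P)\,U^{\mathrm{fin}(P)-1}$; substituting $U=\frac1{1-V}=\sum_{m\ge0}V^m$ with $V=ytab\,(P^-+P^<)$ from~\eqref{eq:eqU} into each of the $k=\mathrm{fin}(P)-1$ factors expands $P(U)$ as the generating series of exactly the pairs $(P,S)$ of the statement. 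Indeed the $k$ factors produce the $k$-tuple $S=(U_1,\dots,U_k)$, each factor $U=\sum_m V^m$ produces one sequence $U_i=(V_{i,1},\dots,V_{i,j_i})$, and each factor $V$ produces a fish marked on an upper flat point or a branch point (which is precisely what $P^-+P^<$ enumerates) carrying the extra weight $ytab$. Reading off exponents then shows that the total $\mathrm{lsize}$ and $\mathrm{rsize}$ on the right (the sizes of $P$ plus the full sizes of all blocks) equal $\mathrm{lsize}(F)=i+1$ and $\mathrm{rsize}(F)=j+1$ of a marked-tail fish $F$, so the proposition is equivalent to producing the bijection and checking that it transports these statistics.

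Second, I would define the map from pairs to marked-tail fish by iterated grafting along the fin, in the spirit of the wasp-waist operations of Theorem~\ref{thm:wasp-waist}. The fin of $P$ provides $k=\mathrm{fin}(P)-1$ positions, one per factor of $U$; at the $i$-th position I would graft the sequence $U_i$ by attaching the blocks $V_{i,1},\dots,V_{i,j_i}$ in turn, each block being glued through its marked upper flat or branch point by an upper-edge gluing that creates exactly one new branch point. Since such a gluing involves no lower edge, each block keeps all of its left and right lower free edges and all of its tails inside $F$, which is precisely the shift recorded by the extra factor $ytab$. When $S$ is empty the output is $F=P$ with its first tail (the end of the fin) marked; this matches the splitting $P^>=P(1)+P^<$, the $P(1)$ part corresponding exactly to the empty decoration. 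When $S$ is nonempty the marked tail of $F$ is a canonically determined tail other than the first, whose location is dictated by the grafting, and the validity of each intermediate object as a genuine fighting fish follows as in the proof of Theorem~\ref{thm:wasp-waist} by interleaving the incremental growths of the blocks and of $P$.

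The decisive step, and the one I expect to be the main obstacle, is to make this grafting canonical and provably bijective. Concretely one must (i) specify unambiguously how the marked upper flat or branch point of each block is identified with a fin edge of the growing structure, so that distinct pairs $(P,S)$ yield distinct $(F,\tau)$, and (ii) pin down exactly which tail of $F$ is declared marked, since during the grafting the fin of $P$ is modified and its first tail is displaced. To control (i) and (ii) I would describe the inverse peeling: starting from $(F,\tau)$, follow the border path joining the marked tail $\tau$ back to the fin and strip off the grafted sub-fish in reverse order, recording at which fin edge and at which marked point each was attached; the relation that a fish has one more tail than branch points together with the incremental-growth characterization then guarantee that each peeled piece is again a valid fish marked at an upper flat or branch point, and that the process terminates on the underlying fish $P$. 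Once grafting is shown to be well defined, weight preserving and injective, bijectivity can in fact be obtained economically: injectivity together with the equality of generating series $P^>=P(U)$ forces equality of cardinalities in every $(i,j,\mathrm{size},\mathrm{tails})$-class, hence the map is a bijection, and the edge counts $i+1$ and $j+1$ match on both sides as required.
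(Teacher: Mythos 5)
Your overall plan agrees with the paper's sketch at the top level: you expand $P(U)$ via $U=\frac1{1-V}$ and $V=ytab(P^-+P^<)$ from~\eqref{eq:eqU}, you insert the marked fish $V_{i,j}$ at the $k$ inner positions of the fin of $P$, and you invert by peeling from the marked tail; you even correctly note that for $S\neq\emptyset$ the marked tail of the composite is not its first tail, which is exactly why the statement is about fish with a marked tail rather than about the fin. But your forward map breaks at the attachment step. In this model \emph{every} gluing identifies a left lower edge with a right upper edge, or a left upper edge with a right lower edge, so ``an upper-edge gluing that involves no lower edge'' does not exist: any edge-gluing consumes exactly one lower free edge and one upper free edge of the pieces involved. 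Your own weight bookkeeping then rules out attaching the blocks by edge-gluings altogether: the factor $ytab$ forces size, tails, lsize and rsize to be \emph{exactly} additive, hence no free edge of $P$ or of any $V_{i,j}$ may be consumed; but then nothing attaches a block to the host, and in particular the head of each block would survive as a second cell with two free left edges, which no fighting fish can contain. The paper's construction is a genuinely different surgery, not a gluing: $P$ is sliced vertically above each inner fin point, each $V_{i,j}$ is cut at its nose and at its marked point, inflated vertically to match the width of the cut, and spliced in between the slices; the attachment happens along interior vertical cross-sections, the only boundary identification occurs at the marked point (which is precisely why it becomes a branch point), and additivity of all four statistics then comes out as required.

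The second gap is the one you yourself flag as ``the decisive step'' and then leave unresolved: canonicity of the inverse. The paper makes the peeling well defined by a quantitative device your proposal lacks: traveling along the spine from the marked tail, at a flat point one slices off the next piece of $P$; at a branch point $x$ one records the length $\ell$ of the shortest vertical cut above $x$ separating nose from tail, and travels until the vertical cut length first returns to $\ell$ --- the slice traversed is the next factor $V_{i,j}$, and whether the point then reached is again a branch point or a lower flat point decides whether the current sequence $U_i$ continues or closes. Without this rule (or an equivalent) neither well-definedness nor injectivity of your map can be checked, so your fallback argument --- injectivity plus the algebraically established identity $P^>=P(U)$ of~\eqref{eq:markedPU} forces surjectivity class by class --- is legitimate in principle but has nothing to stand on. In short: the skeleton (expansion of $U$, insertion along the fin, peeling from the marked tail) matches the paper, but the two load-bearing ingredients, the cut-inflate-splice surgery replacing your impossible gluing and the vertical-cut-length criterion making the factorization canonical, are missing.
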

The bijection is illustrated by Figure~\ref{fig:compose}.
\begin{figure}
\centering
\includegraphics[width=0.8\textwidth]{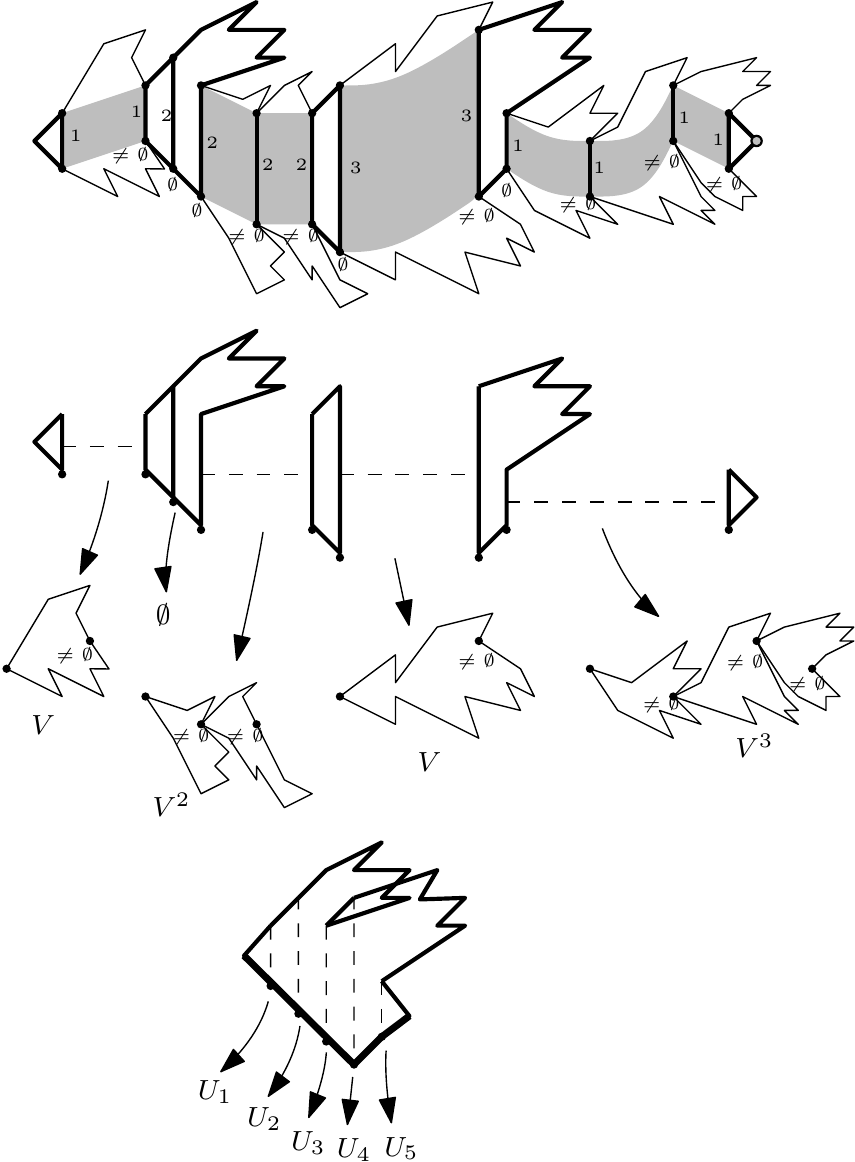}
\caption{The bijective interpretation of $P^>=P(U)$.}
\label{fig:compose}
\end{figure}
\begin{proof}[Sketch of proof] 
Given a pair $(P,S)$ as above, mark the first tail of $P$, then slice
$P$ above each inner point of its fin, cut each $V_{i,j}$ at its nose
and marked point and inflate it vertically to match the width of $P$
above the $i$th point $x_i$ of its fin, and insert the inflated
sequence $U_i$ between the slice before and after $x_i$. This produce
a fighting fish $P'$ with a marked tail.  The fact that the marked
point of each $V_{i,j}$ is an upper flat point or a branch point ensures
that the resulting point in $P'$ is a branch point. 

Conversely a fish $P'$ with a marked tail can be decomposed upon
traveling along the \emph{spine} connecting the tail to the nose and
cutting the fish in slices: Starting from the tail, 
\begin{enumerate}
\item travel to the left along the current lower free edge to reach a point $x$
\item if $x$ is a flat point then a new slice of $P$ is obtained above
  the edge that has been traversed; resume at step 1;
\item otherwise $x$ is a branch point, then let $\ell$ denote the
  length of the shortest vertical cut above $x$ that separates the
  nose and the tail:
\begin{enumerate}
\item travel along the spine until the length of the vertical cut
  returns to the the value $\ell$ for the first time, above a new
  lower point $x$: the resulting slice gives the next factor $V_{i,j}$
  in the decomposition;
\item if the new point $x$ is again a branch point then resume at the
  previous step,
\item otherwise $x$ is a lower flat point and a new factor $U_i$ has been completed; resume at step 1.
\end{enumerate} 
\end{enumerate}
The proof that the two constructions above are inverse one of the
other is omitted.
\end{proof}

\subsection{A bijective proof of the relation $V=ytab(\Delta P)(U)$}
Let $(\Delta P)(u)=u\frac{P(u)-P(1)}{u-1}=P(u^k\to u+\ldots+u^k)$.
Then $(\Delta P)(u)$ is the generating series of fighting fish with
a marked edge on the fin, where $u$ marks the distance between the
nose and the endpoint of the marked edge.

\begin{pro}
There is a bijection between 
\begin{itemize}
\item fighting fish with a marked branch or flat lower point having
  $i+1$ left lower free edges and $j+1$ right lower free edges,
\item and pairs $(P,S)$ where $P$ is a fighting fish with a marked
  edge on the fin at distance $k$ from the nose and $S$ is a $k$-uple
  $(U_1,\ldots,U_k)$ of sequences $U_i=(V_{i,1},\ldots,V_{i,j_i})$ of
  fish that are marked on an upper flat point or a branch point, such
  that the total number of left lower free edges and right lower free edges are
  respectively $i+1$ and $j+1$.
\end{itemize}
\end{pro}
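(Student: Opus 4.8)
The plan is to prove this bijection by adapting the construction of the previous proposition, replacing the marked tail by the marked lower point and the whole fin of the core by a single marked fin edge. First I would make explicit the algebraic identity being interpreted. By~\eqref{eq:eqU} we have $V=ytab(P^-+P^<)$ and $U=\frac1{1-V}$, so the relation $V=ytab(\Delta P)(U)$ reads, after cancelling the common factor $ytab$, as $P^-+P^<=(\Delta P)(U)$. Expanding the right-hand side, $(\Delta P)(u)=u\frac{P(u)-P(1)}{u-1}$ distinguishes in each fish $P$ a fin edge at some distance $k\in\{1,\ldots,\mathrm{fin}(P)-1\}$ from the nose (contributing $u^k$), and the substitution $u\to U=\frac1{1-V}$ replaces $U^k$ by a product of $k$ geometric series in $V$, that is, by a $k$-uple $(U_1,\ldots,U_k)$ of sequences of $V$-objects. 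Reading $V=ytab(P^-+P^<)$ through the upper/lower flat-point symmetry, each such $V$-object is $ytab$ times a fish marked on an upper flat point or a branch point, so the right-hand side enumerates exactly the pairs $(P,S)$ of the statement.

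Next I would describe the two maps in close parallel with the previous proof. For the forward map, given $(P,S)$ I would slice $P$ above its fin edges from the nose up to the marked edge at distance $k$, then inflate and insert the sequence $U_i$ between the slices adjacent to the $i$-th fin point $x_i$, for $i=1,\ldots,k$, exactly as before, and finally distinguish the lower point $x_k$ that is the endpoint of the marked edge. The essential new feature is the flat/branch dichotomy at $x_k$: if $U_k$ is empty then $x_k$ remains a lower flat point, whereas if $U_k$ is nonempty the insertion above $x_k$ turns it into a branch point, so the output is precisely a fish marked on a lower flat point or a branch point, matching $P^-+P^<$. For the inverse map, given a fish with a marked lower flat or branch point $p$, I would run the same spine traversal as before but starting from $p$ rather than from a tail: first extract the sequence $U_k$ by the branch-handling step when $p$ is a branch point, and set $U_k$ empty when $p$ is flat; then travel left along the lower free edges toward the nose, emitting a core slice at each flat point and grouping the factors cut at successive branch points into the sequences $U_{k-1},\ldots,U_1$. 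The edge incident to $p$ becomes the marked fin edge of the recovered core $P$, at distance $k$ equal to the number of slices produced, and the part of the fish lying to the right of $p$ is carried along as the portion of the core beyond that edge.

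The verification then reduces to the same geometric checks as in the previous proposition: that the forward map produces a valid fighting fish, that the two constructions are mutually inverse, and that the left and right lower free edge counts of $(P,S)$ are exactly $i+1$ and $j+1$. I expect the main obstacle to be twofold. First, one must confirm that starting the traversal from an interior lower point rather than from a tail is well defined: unlike a tail, $p$ carries structure to its right, and I would need to argue that this right-hand structure is absorbed correctly into the core beyond the marked edge and is never revisited by the traversal, while the flat/branch nature of $p$ determines $U_k$ unambiguously. Second, I must track the statistics through the factor $ytab$, which records that inserting each $V$-object raises the size, the number of tails, and both the left and right sizes by one; only once this bookkeeping is checked does cancelling $ytab$ on the two sides guarantee matching numbers of left and right lower free edges. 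Granting the flat/branch case analysis at $p$ and this accounting, mutual inversion then follows exactly as in the previous proposition, whose analogous verification was omitted.
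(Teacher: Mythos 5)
Your proposal is correct and takes essentially the same route as the paper, whose proof is a two-line sketch saying exactly what you propose: reuse the decomposition of the previous proposition, with the only difference that the substitution of $U$-factors is not made along the whole fin but stops at the marked fin edge, so that the marked point of the resulting fish is a lower flat point or a branch point instead of a tail. Your explicit unpacking of the identity $P^-+P^<=(\Delta P)(U)$, the flat/branch dichotomy according to whether $U_k$ is empty, and the spine traversal started at the marked point are all consistent elaborations of the details the paper omits.
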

\begin{proof}[Sketch of proof]
This bijection is based on the same decomposition as the previous one.
The only difference is that the substitution of $U$ factors is not
made along the whole fin of the fish: as a result the marked point is
a flat lower point or a branch point instead of being a tail.
\end{proof}
\subsection{A bijective proof of the relation $ytabP^<=V^2$}
\begin{pro}
There is a bijection between
\begin{itemize}
\item fighting fish a marked branch point having $i+1$ left lower free edges and $j+1$ right lower free edges,
\item and pairs of fighting fish $(P_1,P_2)$ marked on a branch or lower flat point, such that the total number of left lower free edges and right lower free edges are
  respectively $i+1$ and $j+1$.
\end{itemize}
\end{pro}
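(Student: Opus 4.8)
The plan is to read the identity $ytab\,P^{<}=V^{2}$ combinatorially rather than verify it algebraically. Recalling from Equation~\eqref{eq:eqU} that $V=ytab(P^{-}+P^{<})$, the identity is equivalent to $P^{<}=(P^{-}+P^{<})\cdot V$. Since the factor $ytab$ hidden in the second occurrence of $V$ records exactly one extra cell, one extra tail, one extra left lower free edge and one extra right lower free edge, matching coefficients on the two sides forces the four statistics $\mathrm{size}$, $\mathrm{tails}$, $\mathrm{rsize}$ and $\mathrm{lsize}$ to be simultaneously \emph{additive} under the sought bijection, the right-hand side being an ordered pair $(P_{1},P_{2})$ of fish each carrying a marked lower flat or branch point. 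First I would therefore aim for a cut-and-paste correspondence that respects all four statistics at once.

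For the forward map I would start from a fighting fish $P$ with a marked branch point $x$, and use the fact that $x$ is a pinch vertex at which an upper sheet and a lower sheet of the surface meet. The idea is to \emph{unzip} $P$ at $x$ along these two sheets, reusing the spine-slicing mechanism of the two previous propositions: the canonical shortest vertical cut attached to $x$ separates $P$ into a nose-side piece $P_{1}$ and the sub-fish $P_{2}$ opened up at the branch. The vertex $x$ then descends to a distinguished lower point on each of $P_{1}$ and $P_{2}$, and I would check that on each piece this point is a flat point or a branch point, never a nose or a tail. Here the accounting $\#\mathrm{tails}=\#\mathrm{branch\ points}+1$ coming from the relation $P(1)+P^{<}=P^{>}$ is the decisive bookkeeping: the single branch point at $x$ is precisely the one extra branch created by the gluing, which is why exactly one extra tail (the factor $y$ in $ytab$) is consumed.

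For the inverse I would re-zip: given $(P_{1},P_{2})$ with marked lower flat or branch points $p_{1}$ and $p_{2}$, identify $p_{1}$ with $p_{2}$ and splice the two surfaces there so as to create a single new pinch vertex, reversing the cut above. Using the incremental definition of fighting fish one then checks that the spliced object is again a valid fighting fish, that each of its four statistics is the sum of the corresponding statistics of $P_{1}$ and $P_{2}$, and that the only new distinguished vertex produced is the branch point just formed, which becomes the mark. A check on the smallest fish carrying a branch point (size $6$, area $5$), whose two tails force each piece to be unbranched with a marked flat point, would confirm the normalisation.

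The step I expect to be the main obstacle is the geometric description of the unzipping at a branch point: one must specify unambiguously which cells are assigned to $P_{1}$ and which to $P_{2}$, prove that both pieces are genuine fighting fish with the correct induced marks, and verify that unzipping and re-zipping are mutually inverse, all while the planar projection of $P$ may be degenerate with cells covering one another near $x$. Establishing this inverse relation under such immersions---rather than the routine statistic bookkeeping---is the delicate point, and it is exactly where the spine decomposition of the previous propositions must be adapted so that a marked \emph{branch point}, rather than a marked tail or a marked fin edge, drives the slicing.
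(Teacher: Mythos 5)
You have the right algebraic reading of the identity ($ytab\,P^{<}=V^{2}$, i.e.\ $P^{<}=(P^{-}+P^{<})\cdot V$, with the four boundary statistics additive) and the right toolbox (the spine and vertical-cut machinery of the two previous propositions, starting from the shortest vertical cut above the marked branch point $x$). But your forward map stops exactly where the paper's construction actually lives, and the missing piece is not a technicality. The paper cuts \emph{twice}: at the shortest vertical cut of length $\ell$ above $x$, \emph{and} at the first vertical cut of the same length $\ell$ encountered along the spine from $x$ toward the nose. This equal-length, first-return cut is the whole mechanism: matching lengths are what allow the two outer parts to be re-glued into a single well-formed fish (one factor of the pair), it is what lets the slice between the two cuts deflate into the other factor, marked at a branch or lower flat point, and ``first return'' is what makes the decomposition canonical and hence invertible --- precisely the mechanism of step 3(a) in the marked-tail decomposition for $P^{>}=P(U)$. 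Your single ``unzip at $x$'' has no analogue of this and is not even well defined in general: a branch point need not be a genuine pinch of two sheets. Combinatorially it is the boundary vertex where a run of upper free edges is followed by a run of lower free edges (which is exactly why $\#\mathrm{tails}=\#\mathrm{branch\ points}+1$), and it already occurs in planar fish --- the three-cell fish with two tails (size $4$), or the area-$4$, size-$5$ fish of Figure~\ref{fig:fish1}(c) --- where there are no overlapping sheets at $x$ to separate; only a positive-length vertical cut above $x$ disconnects anything. So your normalization check on ``the smallest fish carrying a branch point (size $6$, area $5$)'' is also off: non-planarity is not needed for a branch point, and the minimal case is at size $4$.

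Your proposed inverse is likewise not the inverse of any map of this kind: recomposition is not ``identify $p_{1}$ with $p_{2}$ to create a pinch'' but a slab insertion, as in the first proposition of this appendix --- cut $P_{2}$ open between its nose and its marked point, inflate it vertically to the length of the cut above $p_{1}$, and insert it into that cut. The minimal case already refutes point-identification: the unique size-$4$ fish with a marked branch point (the three-cell fish with two tails) must correspond to the pair of single cells each marked at its bottom vertex, and the wedge of two cells at a point is not a fighting fish. Relatedly, your bookkeeping ``the factor $ytab$ records exactly one extra cell'' misreads $t$: throughout the paper $t$ marks the size (number of lower free edges) minus one, not the area, and under the true bijection area is \emph{not} additive --- the inflation step creates cells --- so the framing you flag as the main obstacle (``specify which cells are assigned to $P_{1}$ and which to $P_{2}$'') cannot be carried out: no cell-partitioning correspondence exists, and the bijection must be phrased, as the paper does, on boundary slices with inflation and deflation. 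In short, you correctly located the crux but left it unresolved, and the specific construction you sketch (one cut, wedge-sum inverse, cell accounting) would fail; the equal-length first-return cut toward the nose is the idea your plan is missing.
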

The bijection is illustrated by Figure~\ref{fig:branch}.
\begin{figure}
\centering
\includegraphics[width=0.8\textwidth]{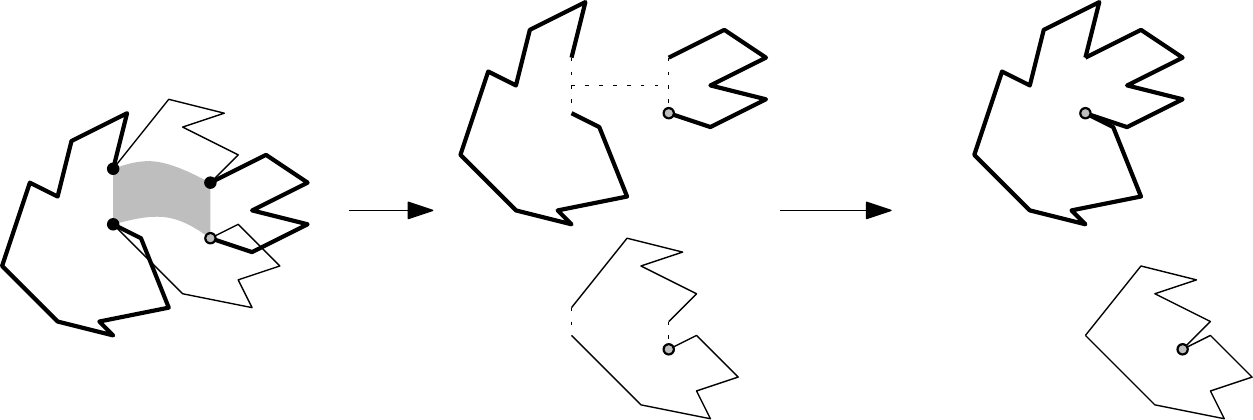}
\caption{The bijective interpretation of $ytabP^<=V^2=(ytab(P^-+P^<))^2$.}
\label{fig:branch}
\end{figure}
\begin{proof}
Let $P$ be a fighting fish with a marked branch point $x$, and let
$\ell$ denote the length of the shortest vertical cut segment above
$x$. Then the decomposition is obtained by cutting $P$ at this vertical
cut segment and at the first vertical cut segment of the same length
that is found along the spine from $x$ toward the nose of $P$. 
\end{proof}
\subsection{A bijective proof of the relation $P^>=tU^2(1+aP^>)(1+bP^>)$}
This relation follows from the wasp-waist decomposition of fighting
fish upon substituting $u=U$: the relation $P^>=P(U)$ and
$ytabU(\Delta P)(U)=V$ indeed immediately leads to
$P^>=tU(1+aP^>)(1+bP^>)+VP^>$, from which one concludes by iteration.

\end{document}